\newtheorem{theorem}{Theorem}[section]
\newtheorem{proposition}[theorem]{Proposition}
\newtheorem{lemma}[theorem]{Lemma}
\newtheorem{corollary}[theorem]{Corollary}
\theoremstyle{definition}
\newtheorem{example}[theorem]{Example}
\begin{document}

\author[A. Javan]{Arash Javan}
\address{Department of Mathematics, Tarbiat Modares University, 14115-111 Tehran Jalal AleAhmad Nasr, Iran}
\email{a.darajavan@modares.ac.ir; a.darajavan@gmail.com}

\author[A. Moussavi]{Ahmad Moussavi}
\address{Department of Mathematics, Tarbiat Modares University, 14115-111 Tehran Jalal AleAhmad Nasr, Iran}
\email{moussavi.a@modares.ac.ir; moussavi.a@gmail.com}

\author[Peter Danchev]{Peter Danchev}
\address{Institute of Mathematics and Informatics, Bulgarian Academy of Sciences, 1113 Sofia, Bulgaria}
\email{danchev@math.bas.bg; pvdanchev@yahoo.com}

\title{Generalizing Semi-$n$-Potent Rings}
\keywords{idempotent; tripotent; strongly nil-clean ring; Boolean ring; semi-boolean ring; semi-tripotent ring}
\subjclass[2010]{16N40, 16S50, 16U99}

\maketitle




\begin{abstract}
We define and explore the class of rings $R$ for which each element in $R$ is a sum of a tripotent element from $R$ and an element from the subring $\Delta(R)$ of $R$ which commute each other. Succeeding to obtain a complete description of these rings modulo their Jacobson radical as the direct product of a Boolean ring and a Yaqub ring, our results somewhat generalize those established by Ko\c{s}an-Yildirim-Zhou in Can. Math. Bull. (2019).
\end{abstract}

\section{Introduction and Motivation}

Throughout this paper, all rings are assumed to be unital and associative. Almost all symbols, notation and concepts are standard being consistent with the classical book \cite{lamf}. The Jacobson radical, the lower nil-radical, the set of nilpotent elements, the set of idempotent elements, and the set of units of \( R \) are denoted, respectively, by \( J(R) \), \( \text{Nil}_{*}(R) \), \( \text{Nil}(R) \), \( \text{Id}(R) \), and \( U(R) \). Additionally, we write ${\rm M}_n(R)$, ${\rm T}_n(R)$ and $R[x]$ for the $n \times n$ full matrix ring, the $n \times n$ upper triangular matrix ring, and the polynomial ring over $R$, respectively.

The core focus of this exploration is the set
\begin{align*}
J(R) \subseteq \Delta(R) &= \{ x \in R : x + u \in U(R) \text{ for all } u \in U(R) \} \\
                           &= \{ x \in R : 1 - xu \text{ is invertible for all } u \in U(R) \} \\
                           &= \{ x \in R : 1 - ux \text{ is invertible for all } u \in U(R) \},
\end{align*}
which was examined by Lam \cite[Exercise 4.24]{lame} and recently explored in detail by Leroy-Matczuk \cite{lmr}. It was indicated in \cite[Theorems 3 and 6]{lmr} that \( \Delta(R) \) represents the (proper) largest Jacobson radical subring of \( R \) that remains closed under multiplication by all units (resp., quasi-invertible elements) of \( R \), and it is an ideal of $R$ exactly when $\Delta(R)=J(R)$.

In the contemporary ring theory, the class of strongly nil-clean rings possess significant importance. A ring \(R\) is called \textit{strongly nil-clean} if every element of \(R\) can be expressed as the sum of an idempotent in $R$ and a nilpotent element in $R$ that commute with each other (see \cite{chenb, diesl, kzw}). Later on, Chen and Sheibani in \cite{css} generalized this concept and introduced the so-called strongly 2-nil-clean rings: a ring \(R\) is \textit{strongly 2-nil-clean} if every element of \(R\) can be written as the sum of a tripotent element of $R$ (i.e., an element \(x\in R\) such that \(x^3 = x\)) and a nilpotent element of $R$ that commute.

On the other hand, in a way of similarity, \textit{strongly J-clean} rings are rings in which every element can be written as the sum of an idempotent and an element from the Jacobson radical that commute (see \cite{csj, chensjc}). In this vein, Ko\c{s}an et al. introduced in \cite{kyzr} the so-termed \textit{semi-tripotent} rings $R$ in which each element is the sum of a tripotent element from $R$ and an element from \(J(R)\).

Considering and analyzing these definitions, as well as the fact that \(\Delta(R)\) is a (possibly proper) subset of \(J(R)\), that is {\it not} necessarily an ideal, and which also does {\it not} have useful properties like the set \(\text{Nil}(R)\), a question naturally arises about the properties of those rings $R$ for which each element is the sum of a tripotent element from $R$ and an element from \(\Delta(R)\) that commute with each other. The main objective of the current article is namely to investigate these types of rings and to conduct a comprehensive study of their structure.

Thereby, we come to the following key notion, motivated from the discussion alluded to above.

\medskip

\noindent{\bf Definition.} We say that \( R \) is a {\it strongly $\Delta$ tripotent} ring, or just an {\it SDT ring} for short, if every element of \( R \) is the sum of a tripotent from $R$ and an element from \( \Delta(R) \) that commute with each other. Such a sum's presentation is also said to be an {\it SDT representation}.

\medskip

Our further plan in organization of our study is the following: In the next section, we obtain some crucial examples and principal properties of such rings establishing their connection with many standard properties -- e.g., such as uniquely clean (see Corollary~\ref{cor 2}). In the subsequent section, we achieve the major result describing the algebraic structure of the SDT rings in an appropriate form showing that these rings modulo their Jacobson radical are the direct product of a Boolean ring and a Yaqub ring (see Theorem~\ref{boolean and yaqub}). Some other closely related statements are also proved such as Propositions~\ref{local} and \ref{semitrip}. In the fourth section, we study the behavior of the given SDT concept under various ring extensions and, specifically, we characterize when ${\rm T}_n(R)$ is a SDT ring by finding a necessary and sufficient condition, provided that $R$ is local and $n\geq 3$ (see Theorem~\ref{local}). In the final fifth section, we conclude with some commentaries and three open problems (see, e.g., Problems 1, 2 and 3) which, hopefully, will stimulate a future intensive examination of the present subject.

\section{Examples and Basic Properties}

The following claim can easily be proven, so we omit the details leaving them to the interested reader for check.

\begin{lemma}\label{lemma 0}
(1) Suppose \( R = \prod_{i \in I} R_i \). Then, \( R \) is an SDT ring if, and only if, for each \( i \in I \), \( R_i \) is an SDT ring.

(2) Suppose \( R \) is a ring and \( I \) is an ideal of \( R \) such that \( I \subseteq J(R) \). Then, \( R/I \) is an SDT ring.
\end{lemma}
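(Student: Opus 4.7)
The plan is to verify both claims directly from the definition of SDT ring, the only substantive point being how $\Delta$ behaves under the two constructions in question.

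For (1), I would first record the easy product formula $\Delta\bigl(\prod_{i\in I}R_i\bigr)=\prod_{i\in I}\Delta(R_i)$, which is immediate from $U\bigl(\prod_{i\in I}R_i\bigr)=\prod_{i\in I}U(R_i)$ and the definition of $\Delta$. Given this, the forward implication follows by assembling componentwise SDT decompositions: if each $R_i$ is SDT and $r=(r_i)\in R$, then writing $r_i=t_i+d_i$ in each factor yields $r=(t_i)+(d_i)$, with $(t_i)$ tripotent, $(d_i)\in\Delta(R)$, and the two commuting componentwise. For the converse, given $r_i\in R_i$, I would consider the element $r\in R$ all of whose coordinates vanish except the $i$-th (which equals $r_i$); an SDT decomposition $r=t+d$ in $R$ projects under $\pi_i$ to a decomposition $r_i=\pi_i(t)+\pi_i(d)$, where $\pi_i(t)$ is tripotent, commutativity is inherited, and $\pi_i(d)\in\Delta(R_i)$ by the product formula.

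For (2), the intended hypothesis is evidently that $R$ itself is an SDT ring (an assumption the statement in the excerpt apparently omits). Given $\bar{x}\in R/I$, I would lift to some $x\in R$, write $x=t+d$ with $t^3=t$, $d\in\Delta(R)$, and $td=dt$, and reduce modulo $I$ to obtain $\bar{x}=\bar{t}+\bar{d}$ with $\bar{t}$ tripotent and commuting with $\bar{d}$. The only nontrivial verification is that $\bar{d}\in\Delta(R/I)$. For this I would fix $\bar{u}\in U(R/I)$, pick any preimage $u\in R$, and invoke the standard fact that $I\subseteq J(R)$ forces $u\in U(R)$ (from $\bar{u}\bar{v}=\bar{1}$ one deduces $uv\in 1+J(R)\subseteq U(R)$, so $u$ is right invertible, and symmetrically left invertible). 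Then $d+u\in U(R)$ by the defining property of $\Delta(R)$, whence $\bar{d}+\bar{u}=\overline{d+u}\in U(R/I)$.

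Neither step presents a real obstacle: (1) rests entirely on the trivially verified product formula for $\Delta$, while (2) rests on unit-lifting modulo the Jacobson radical. I would expect this lemma to be well within what the authors describe as details left for the interested reader.
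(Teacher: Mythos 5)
Your proof is correct. The paper offers no proof of this lemma at all (it is explicitly left to the reader), and your argument is precisely the intended routine verification: for (1) the componentwise description $\Delta\bigl(\prod_{i\in I}R_i\bigr)=\prod_{i\in I}\Delta(R_i)$, which follows from $U\bigl(\prod_{i\in I}R_i\bigr)=\prod_{i\in I}U(R_i)$, and for (2) the observation that any preimage of a unit of $R/I$ is a unit of $R$ when $I\subseteq J(R)$, so that $\Delta(R)$ maps into $\Delta(R/I)$. You are also right to flag that part (2) is misstated as printed: the hypothesis that $R$ itself is an SDT ring must be added, and this is indeed how the lemma is invoked later (e.g.\ in the proof of Theorem~\ref{boolean and yaqub}).
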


We proceed by proving with the following three technical assertions.

\begin{lemma}\label{lemma 1}
 For every \( e = e^3 \in R \) and \( d \in \Delta(R) \), we have \( (e \pm e^2)d, d(e \pm e^2), 2e^2d, \) and \( 2ed \in \Delta(R) \).
\end{lemma}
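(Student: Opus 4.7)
The strategy is to exhibit $e+e^2$ and $e-e^2$ each as the difference of $1$ and a unit, then to use the two structural facts about $\Delta(R)$ recalled in the introduction: $\Delta(R)$ is a subring of $R$, and it is closed under left and right multiplication by arbitrary units of $R$ (Leroy--Matczuk, as quoted in the paper).

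The key computational observation is that for a tripotent $e$, both $w_1 := 1 - e - e^2$ and $w_2 := 1 + e - e^2$ are involutions. Indeed, using $e^3 = e$ (and consequently $e^4 = e^2$), a direct expansion yields
\[
w_1^2 = 1 - 2e - e^2 + 2e^3 + e^4 = 1, \qquad w_2^2 = 1 + 2e - e^2 - 2e^3 + e^4 = 1,
\]
so $w_1, w_2 \in U(R)$. This furnishes the identities $e+e^2 = 1 - w_1$ and $e-e^2 = w_2 - 1$.

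With these in hand, for any $d \in \Delta(R)$ the elements $w_1 d$, $w_2 d$, $dw_1$, $dw_2$ all lie in $\Delta(R)$ by closure under unit-multiplication, and then additive closure inside the subring $\Delta(R)$ gives
\[
(e+e^2)d = d - w_1 d \in \Delta(R), \qquad (e-e^2)d = w_2 d - d \in \Delta(R),
\]
with the symmetric statements for $d(e\pm e^2)$. Finally, $2ed$ and $2e^2 d$ follow for free from the additive identities $2e = (e+e^2) + (e-e^2)$ and $2e^2 = (e+e^2) - (e-e^2)$. The only step that requires insight rather than bookkeeping is spotting the involutions $w_1, w_2$; after that, everything is a two-line manipulation inside $\Delta(R)$.
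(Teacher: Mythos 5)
Your argument is correct and is essentially the paper's own proof: the involutions $w_1 = 1 - e - e^2$ and $w_2 = 1 + e - e^2$ are exactly the units $(1-e^2) \mp e$ used there, and the remaining steps (closure of $\Delta(R)$ under multiplication by units via Leroy--Matczuk, then additive closure to extract $(e \pm e^2)d$ and hence $2ed$, $2e^2d$) coincide with the paper's. Nothing to add.
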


\begin{proof}
For every \( e = e^3 \in R \), we have \[((1-e^2) - e)((1-e^2) - e) = 1 = ((1-e^2) + e)((1-e^2) + e).\] Therefore, \((1-e^2 \pm e) \in U(R)\), so it follows from \cite[Lemma 1(2)]{lmr} that, for every \( d \in \Delta(R) \), both \(((1-e^2 \pm e)d \) and \( d((1-e^2 \pm e) \in \Delta(R)\). Since \( \Delta(R) \) is a subring of \( R \), we have \((e \pm e^2)d \) and \( d(e \pm e^2) \in \Delta(R) \). This implies that \( 2ed \) and \( 2e^2d \in \Delta(R) \), as required.
\end{proof}

\begin{lemma} \label{lemma 2}
Let \( R \) be an SDT ring, and \( a \in R \). If \( a^2 \in \Delta(R) \), then \( a \in \Delta(R) \).
\end{lemma}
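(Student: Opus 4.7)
The plan is to exploit the SDT representation of $a$ together with the fact that $\Delta(R) \subseteq J(R)$ and that $J(R)$ contains no non-zero idempotents. First, I would use the hypothesis to write $a = e + d$ with $e^3 = e$, $d \in \Delta(R)$, and $ed = de$. Squaring this yields
\[
a^2 = e^2 + 2ed + d^2,
\]
so rearranging gives $e^2 = a^2 - 2ed - d^2$. By assumption $a^2 \in \Delta(R)$; since $\Delta(R)$ is a subring, $d^2 \in \Delta(R)$; and Lemma~\ref{lemma 1} provides $2ed \in \Delta(R)$. Consequently $e^2 \in \Delta(R)$.

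The key observation is that $e^2$ is automatically an idempotent, because $(e^2)^2 = e^4 = e \cdot e^3 = e \cdot e = e^2$. Since $e^2 \in \Delta(R) \subseteq J(R)$ and the only idempotent of $J(R)$ is $0$, we conclude $e^2 = 0$. Then the tripotent identity forces $e = e^3 = e \cdot e^2 = 0$, and therefore $a = d \in \Delta(R)$, as desired.

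I do not expect any serious obstacle in this argument; the only subtlety is recognizing that one should not try to pass from $e^2 \in \Delta(R)$ to $e \in \Delta(R)$ directly (which would require $\Delta(R)$ to be an ideal, something not known), but rather to use the structural fact that $e^2$ is itself an idempotent and then invoke the absence of non-trivial idempotents in the Jacobson radical.
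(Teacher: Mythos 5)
Your argument is the paper's argument almost verbatim: write \(a=e+d\), expand \(a^2\), isolate \(e^2=a^2-2ed-d^2\in\Delta(R)\) using Lemma~\ref{lemma 1} and the subring property, observe that \(e^2\) is an idempotent, and conclude \(e^2=0\), hence \(e=e\cdot e^2=0\) and \(a=d\in\Delta(R)\).

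The one genuine flaw is your justification of the step \(e^2=0\). You invoke the containment \(\Delta(R)\subseteq J(R)\), but this is false in general: the correct containment is \(J(R)\subseteq\Delta(R)\) (as displayed in the introduction of the paper), and \(\Delta(R)\) can be strictly larger than \(J(R)\) --- indeed it need not even be an ideal, and at this point in the paper nothing has been proved that would give \(\Delta(R)\subseteq J(R)\) for SDT rings (that equality is only obtained later, in Proposition~\ref{pro 3}, under the extra hypothesis \(2\in U(R)\), and its proof uses the present lemma). Fortunately the fact you actually need, namely \(\Delta(R)\cap\text{Id}(R)=\{0\}\), follows directly from the definition of \(\Delta(R)\) without any detour through \(J(R)\): if \(f=f^2\in\Delta(R)\), then taking \(u=1\) in the definition gives \(1-f\in U(R)\), and \(f(1-f)=f-f^2=0\) forces \(f=0\). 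Replacing your appeal to \(J(R)\) by this observation (which is exactly what the paper's proof uses) makes the argument correct.
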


\begin{proof}
Assume that \( a = e + d \) is an SDT representation. We have \( a^2 = e^2 + 2ed + d^2 \). By Lemma \ref{lemma 1}, it must be that \[ e^2 = a^2 - 2ed - d^2 \in \Delta(R) \cap \text{Id}(R) = \{0\} ,\] which implies \( e = 0 \). Thus, \( a = d \in \Delta(R) \), as expected.
\end{proof}

\begin{lemma} \label{lemma 3}
Let \( R \) be an SDT ring. Then, for every \( a \in R \), \( a - a^3 \in \Delta(R) \).
\end{lemma}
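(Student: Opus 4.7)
My plan is to invoke an SDT representation $a = e + d$ with $e^3 = e$, $d \in \Delta(R)$, and $ed = de$. Expanding $(e+d)^3$ using these relations gives
\[
a - a^3 \;=\; (d - d^3) \;-\; 3\bigl(e^2 d + e d^2\bigr).
\]
The term $d - d^3$ lies in $\Delta(R)$ because $\Delta(R)$ is a subring containing $d$, so the claim reduces to showing $3(e^2 d + e d^2) \in \Delta(R)$.

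By Lemma~\ref{lemma 1}, $2 ed$ and $2 e^2 d$ lie in $\Delta(R)$; right-multiplying by $d \in \Delta(R)$ gives $2 e d^2, 2 e^2 d^2 \in \Delta(R)$ via the subring property. Adding, $2(e^2 d + e d^2) \in \Delta(R)$. Therefore $a - a^3 \in \Delta(R)$ becomes equivalent to $e^2 d + e d^2 \in \Delta(R)$, and the task becomes upgrading the coefficient from $2$ to $1$. At this stage one can also collect the relations $(e+e^2)d, (e-e^2)d, (e+e^2)d^2, (e-e^2)d^2 \in \Delta(R)$ from Lemma~\ref{lemma 1} and try to combine them to isolate $e^2 d + e d^2$.

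The main obstacle is precisely this coefficient upgrade: any integer linear combination of the Lemma~\ref{lemma 1} relations only produces $e^2 d + e d^2$ with even coefficients, never with coefficient $1$. To close the gap, I would apply Lemma~\ref{lemma 2} to $b = a - a^3$: it suffices to show $b^2 \in \Delta(R)$. A direct computation yields $a^2 - a^4 = -2ed + d^2 - 6 e^2 d^2 - 4 e d^3 - d^4$, every summand of which lies in $\Delta(R)$ by Lemma~\ref{lemma 1} and the subring property; hence $a^2 - a^4 \in \Delta(R)$. Writing $b^2 = a^2(1-a^2)^2 = (a^2 - a^4)(1 - a^2)$ and distributing over $1 - a^2 = g - (2ed + d^2)$, where $g = 1 - e^2$ is an idempotent satisfying $eg = ge = 0$, one exploits that $g$-multiplications annihilate the $e$-containing cross-terms to reduce $b^2$ to a sum of elements of $\Delta(R)$ and elements of the form $g \cdot d^k(1-d^2) \cdot (\text{unit})$, which the closure properties of $\Delta(R)$ should handle. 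Lemma~\ref{lemma 2} then delivers $a - a^3 = b \in \Delta(R)$, finishing the proof.
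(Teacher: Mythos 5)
Your opening reduction is sound and coincides with the paper's first step: from $a=e+d$ one gets $a-a^3=(d-d^3)-3(e^2d+ed^2)$, Lemma~\ref{lemma 1} together with the subring property gives $2(e^2d+ed^2)\in\Delta(R)$, and the problem becomes $e^2d+ed^2\in\Delta(R)$; your diagnosis that only even multiples of the mixed terms are reachable this way is also accurate. The gap is in your resolution of that obstacle: it relocates the difficulty rather than removing it. From $b^2=(a^2-a^4)(1-a^2)$ you cannot invoke ``closure properties,'' because $\Delta(R)$ is closed under multiplication by units and by its own elements, but not by arbitrary ring elements and in particular not by idempotents. Carrying out your distribution with $1-a^2=g-(2ed+d^2)$, $g=1-e^2$, the part $(a^2-a^4)(2ed+d^2)$ does lie in $\Delta(R)$, and the $g$-annihilation kills the $e$-containing summands of $a^2-a^4$, but what survives is exactly $(d^2-d^4)g=(d^2-d^4)(1-e^2)$. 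Since $1-e^2-e$ is a unit (its square is $1$), we have $(d^2-d^4)(1-e^2)=(d^2-d^4)(1-e^2-e)+(d^2-d^4)e$, with the first summand in $\Delta(R)$; so your remaining claim is equivalent to $ed^2-ed^4\in\Delta(R)$, an odd-coefficient statement of precisely the kind you declared unreachable. Squaring does not help either, because $g$ is idempotent and commutes with $d$: $\bigl((d^2-d^4)g\bigr)^2=(d^2-d^4)^2g$ has the same shape, so there is nothing to feed into Lemma~\ref{lemma 2}. Hence the final step is a genuine gap.

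The missing idea, which is how the paper closes exactly this gap, is a \emph{second} application of the SDT hypothesis to a new element, namely to $ed$: write $ed=f+b$ with $f^3=f$, $b\in\Delta(R)$, $fb=bf$. Then $ed+e^2d^2=(f+f^2)+(b+2fb+b^2)$, and multiplying this identity by $d$ and by $d^2$ and using Lemma~\ref{lemma 1} gives $ed^2+e^2d^3\in\Delta(R)$ and $ed^3+e^2d^4\in\Delta(R)$; rewriting $ed^2+e^2d^3=e^2d^2+ed^3+(e^2-e)d^3+(e-e^2)d^2$ yields $e^2d^2+ed^3\in\Delta(R)$, whence $e^2d^2+e^2d^4\in\Delta(R)$, so $(ed+e^2d^2)^2=e^2d^2+2ed^3+e^2d^4\in\Delta(R)$ and Lemma~\ref{lemma 2} gives $ed+e^2d^2\in\Delta(R)$, which differs from $e^2d+ed^2$ only by Lemma~\ref{lemma 1}-terms $(e^2-e)d$ and $(e-e^2)d^2$. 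If you prefer to keep your route via $b=a-a^3$, you would need an analogous second use of the SDT property to dispose of $(d^2-d^4)e$; Lemma~\ref{lemma 2} applied only to $b$, with the single representation of $a$, does not suffice.
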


\begin{proof}
Assume \( a = e + d \) is an SDT representation. We calculate that

\[ a - a^3 = (d - d^3) - (2e^2d + 2ed) - (e^2d + ed^2). \]

Furthermore, according to Lemma \ref{lemma 1}, it suffices to show that \( e^2d + ed^2 \in \Delta(R) \). But, Lemma \ref{lemma 1} tells us that \( e^2d + ed^2 \in \Delta(R) \) precisely when \( ed + e^2d^2 \in \Delta(R) \). Consequently, we show that \( ed + e^2d^2 \in \Delta(R) \).

To this target, assume \( ed = f + b \) is an SDT representation. Then,

\[ e^2d^2 = f^2 + 2fb + b^2. \]

Thus,

\[ ed + e^2d^2 = (f + f^2) + (b + 2fb + b^2). \]

Now, multiplying by \( d \) and \( d^2 \) both sides in the previous relation, we have:

\[ ed^2 + e^2d^3 = (f + f^2)d + (b + 2fb + b^2)d \in \Delta(R), \]

\[ ed^3 + e^2d^4 = (f + f^2)d^2 + (b + 2fb + b^2)d^2. \]

Owing to Lemma \ref{lemma 1}, we infer that \( ed^2 + e^2d^3, ed^3 + e^2d^4 \in \Delta(R) \). Also,

\[ ed^2 + e^2d^3 = ed^2 + e^2d^3 - ed^3 + ed^3 - e^2d^2 + e^2d^2 = e^2d^2 + ed^3 + (e^2 - e)d^3 + (e - e^2)d^2. \]

Thus, in virtue of Lemma \ref{lemma 1}, it follows that \( e^2d^2 + ed^3 \in \Delta(R) \). Therefore, we get

\[
\begin{cases}
e^2d^2 + ed^3 \in \Delta(R) \\
ed^3 + e^2d^4 \in \Delta(R)
\end{cases}
\Longrightarrow e^2d^2 + e^2d^4 \in \Delta(R).
\]

We now have that \[(ed + e^2d^2)^2 = e^2d^2 + 2ed^3 + e^2d^4 \in \Delta(R).\] So, Lemma \ref{lemma 2} enables us that \( ed + e^2d^2 \in \Delta(R) \), as pursued.
\end{proof}

We now arrive at the following concrete application of the last lemma.

\begin{example} \label{example 1}
Let \( R \) be an arbitrary ring. Then, \( R[x] \) is {\it not} an SDT ring.
\end{example}

\begin{proof}
Assume the contrary. Then, applying Lemma \ref{lemma 3}, we derive that \( x - x^3 \in \Delta(R[x]) \), and thus \( 1 - x + x^3 \in U(R[x]) \), which is the wanted contradiction.
\end{proof}

With the previous example in mind, the ring \( R[x] \) is surely not SDT. However, a logical question arises about the form of elements with an SDT representation in the polynomial ring \( R[x] \). We will attempt to answer this question below.

\medskip

Recall that a ring $R$ is said to be {\it 2-primal} if ${\rm Nil}_{*}(R) = {\rm Nil}(R)$. For instance, it is well known that any commutative ring and any reduced ring are definitely 2-primal.

Likewise, for an endomorphism \( \sigma \) of \( R \), the ring \( R \) is called {\it \(\sigma\)-compatible} if, for every \( a, b \in R \), the equality \( ab = 0 \) if, and only if, \( a\sigma(b) = 0 \) (see \cite{ann}). In this case, it is clear that \( \sigma \) is always injective.

\medskip

We now manage to prove the following two pivotal statements.

\begin{proposition} \label{pro 1}
Let \( R \) be a 2-primal and \(\alpha\)-compatible ring. Then, \[\Delta(R[x, \alpha]) = \Delta(R) + Nil_{*}(R[x, \alpha])x.\]
\end{proposition}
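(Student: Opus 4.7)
The plan is to establish both inclusions by reducing them to two structural facts about $R[x,\alpha]$ which are known to follow from the combined 2-primal plus $\alpha$-compatible hypotheses:
(F1) $\mathrm{Nil}_{*}(R[x,\alpha]) = \mathrm{Nil}_{*}(R)[x,\alpha]$; and
(F2) $\sum_{i=0}^{n} a_i x^i \in U(R[x,\alpha])$ if and only if $a_0 \in U(R)$ and each $a_i$ with $i \geq 1$ is nilpotent in $R$.
Both facts are standard for skew polynomial extensions of $\alpha$-compatible 2-primal rings and will be invoked rather than re-proved; 2-primality also supplies $\mathrm{Nil}(R) = \mathrm{Nil}_{*}(R)$.

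For the inclusion $\supseteq$, I would take $f = a_0 + p \cdot x$ with $a_0 \in \Delta(R)$ and $p \in \mathrm{Nil}_{*}(R[x,\alpha])$, and let $u = b_0 + b_1 x + \cdots + b_m x^m$ be an arbitrary unit of $R[x,\alpha]$. By (F2), $b_0 \in U(R)$ and each $b_i$ with $i \geq 1$ is nilpotent, hence lies in $\mathrm{Nil}_{*}(R)$. The constant coefficient of $f + u$ equals $a_0 + b_0 \in U(R)$ because $a_0 \in \Delta(R)$; meanwhile every higher coefficient of $f + u$ lies in $\mathrm{Nil}_{*}(R)$, using (F1) on the part coming from $p \cdot x$ together with the nilpotency of the $b_i$. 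A second application of (F2) delivers $f + u \in U(R[x,\alpha])$, hence $f \in \Delta(R[x,\alpha])$.

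For the inclusion $\subseteq$, I would take $f = a_0 + a_1 x + \cdots + a_n x^n \in \Delta(R[x,\alpha])$. Running $u$ through the constant units $U(R) \subseteq U(R[x,\alpha])$ and reading off constant coefficients via (F2) yields $a_0 + u \in U(R)$ for every $u \in U(R)$, which is exactly $a_0 \in \Delta(R)$. Specializing $u = 1$ and inspecting the non-constant coefficients of $f + 1$ via (F2) forces $a_i \in \mathrm{Nil}(R) = \mathrm{Nil}_{*}(R)$ for every $i \geq 1$. Consequently $q = a_1 + a_2 x + \cdots + a_n x^{n-1}$ belongs to $\mathrm{Nil}_{*}(R)[x,\alpha] = \mathrm{Nil}_{*}(R[x,\alpha])$ by (F1), and the direct identity $q \cdot x = a_1 x + a_2 x^2 + \cdots + a_n x^n = f - a_0$ exhibits $f$ as an element of $\Delta(R) + \mathrm{Nil}_{*}(R[x,\alpha]) x$.

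The principal technical obstacle is securing (F2). I expect to handle it by passing to the quotient $R[x,\alpha]/\mathrm{Nil}_{*}(R[x,\alpha])$, which by (F1) is isomorphic to $(R/\mathrm{Nil}_{*}(R))[x,\bar\alpha]$; under the inherited $\bar\alpha$-compatibility this is a reduced skew polynomial ring whose units are forced to be constants from $U(R/\mathrm{Nil}_{*}(R))$, and the coefficient conditions on units of $R[x,\alpha]$ then lift back via the standard fact that nil ideals lift units. Once (F1) and (F2) are available, both inclusions become coefficient bookkeeping built on the elementary identity $(c_1 + c_2 x + \cdots + c_n x^{n-1}) \cdot x = c_1 x + c_2 x^2 + \cdots + c_n x^n$ valid in any skew polynomial ring.
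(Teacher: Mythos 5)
Your proposal is correct and follows essentially the same route as the paper: both directions rest on the same two cited facts, namely $\mathrm{Nil}_{*}(R)[x,\alpha]=\mathrm{Nil}_{*}(R[x,\alpha])$ and the coefficientwise characterization of $U(R[x,\alpha])$, with the only cosmetic difference being that you use the $f+u\in U$ formulation of $\Delta$ while the paper uses $1-uf\in U$. No gaps.
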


\begin{proof}
Assuming \( f = \sum_{i=0}^{n} a_i x^i \in \Delta(R[x, \alpha]) \), then, for each \( u \in U(R) \), we have that \( 1 - uf \in U(R[x, \alpha]) \). Thus, taking into account \cite[Corollary 2.14]{cheni}, \( 1 - ua_0 \in U(R) \) holds and, for every \( 1 \le  i \le  n \), it holds \( ua_i \in {\rm Nil}_{*}(R) \). Since \( {\rm Nil}_{*}(R) \) is an ideal, we deduce \( a_0 \in \Delta(R) \) and hence, for each \( 1 \le  i \le  n \), we obtain \( a_i \in Nil_{*}(R) \). Since \( R \) is a 2-primal ring, \cite[Lemma 2.2]{cheni} applies to get that \( Nil_{*}(R)[x,\alpha] = Nil_{*}(R[x, \alpha]) \), as desired.

Conversely, assume \( f \in \Delta(R) + Nil_{*}(R[x, \alpha])x \) and \( u \in U(R[x,\alpha]) \). Then, employing \cite[Corollary 2.14]{cheni}, we have \( u \in U(R) + Nil_{*}(R[x, \alpha])x \). But, since \( R \) is a 2-primal ring, we receive \( 1 - uf \in U(R) + Nil_{*}(R[x, \alpha])x \subseteq U(R[x, \alpha]) \), whence \( f \in \Delta(R[x, \alpha]) \), as promised.
\end{proof}

\begin{proposition}\label{pro 2}
Let \( R \) be a 2-primal and \(\alpha\)-compatible ring, and let \( e^3 = e = \sum_{i=0}^n e_i x^i \in R[x, \alpha] \). Then, \( e_0^3 = e_0 \) and, for every \( 1 \le i \le n \), the inclusion \( e_i \in Nil(R) \) is true.
\end{proposition}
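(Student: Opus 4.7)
The plan is to handle the constant coefficient and the higher coefficients separately. For the constant coefficient, I would observe that the evaluation-at-zero map $\phi\colon R[x,\alpha]\to R$, $f\mapsto f(0)$, is a ring homomorphism — in the skew product $(ax^i)(bx^j)=a\alpha^i(b)x^{i+j}$, the constant term is nonzero only when $i=j=0$, so the twist by $\alpha$ never intervenes at the constant level. Applying $\phi$ to the equation $e^3=e$ immediately gives $e_0^3=e_0$.

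For the nilpotency of $e_i$ when $i\ge 1$, my plan is to reduce modulo the nil-radical and work in a reduced setting. Since $R$ is $2$-primal, $N:=\mathrm{Nil}_{*}(R)=\mathrm{Nil}(R)$, and by \cite[Lemma~2.2]{cheni} we have $\mathrm{Nil}_{*}(R[x,\alpha])=N[x,\alpha]$. Passing to the quotient identifies $R[x,\alpha]/N[x,\alpha]$ with the skew polynomial ring $(R/N)[x,\bar\alpha]$ over the reduced, $\bar\alpha$-compatible ring $R/N$, where $\bar\alpha$ is the induced endomorphism (well-defined because $\alpha(N)\subseteq N$). The image $\bar e$ remains a tripotent, so it suffices to show that $\bar e$ must be a constant polynomial; that translates back to $e_i\in N$ for every $i\ge 1$.

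The reason a tripotent in $(R/N)[x,\bar\alpha]$ is forced to be constant is a leading-coefficient / degree argument. If $\deg\bar e=n\ge 1$ with leading coefficient $\bar e_n\neq 0$, then the coefficient of $x^{3n}$ in $\bar e^3=\bar e$, namely $\bar e_n\,\bar\alpha^n(\bar e_n)\,\bar\alpha^{2n}(\bar e_n)$, must vanish. Iterated use of $\bar\alpha$-compatibility (i.e. $ab=0\iff a\bar\alpha(b)=0$) peels the outer $\bar\alpha^{2n}$ off to give $\bar e_n\,\bar\alpha^n(\bar e_n)\,\bar e_n=0$; the reduced-ring identity $bc=0\Rightarrow cb=0$ then yields $\bar e_n^{\,2}\,\bar\alpha^n(\bar e_n)=0$; and a final pass of compatibility strips $\bar\alpha^n$ to leave $\bar e_n^{\,3}=0$, whence reducedness of $R/N$ gives $\bar e_n=0$, a contradiction.

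The step I expect to demand the most care is this last collapse of $\bar e_n\,\bar\alpha^n(\bar e_n)\,\bar\alpha^{2n}(\bar e_n)=0$ down to $\bar e_n^{\,3}=0$, since $\alpha$-compatibility only shifts $\alpha$ between two neighbouring factors and so cannot by itself remove the \emph{interior} $\bar\alpha^n$; the reduction has to weave back and forth between compatibility shifts and swaps of factors across zero products. The remainder of the argument is cheap: the augmentation map handles $e_0^3=e_0$ at once, and the quotient decomposition is a direct consequence of the cited machinery of \cite{cheni}.
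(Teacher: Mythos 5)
Your argument is correct and follows essentially the same route as the paper: extract the top coefficient $e_n\alpha^n(e_n)\alpha^{2n}(e_n)$ from $e^3=e$, collapse it to $e_n^3=0$ via iterated $\alpha$-compatibility, and induct downward through the quotient by $\mathrm{Nil}_*$. The only differences are packaging — you pass to the reduced quotient $R/\mathrm{Nil}_*(R)$ once at the outset and prove the compatibility collapse by hand (using reducedness for the swap $uv=0\Rightarrow vu=0$) where the paper cites \cite[Lemma 2.1]{hm}, and you dispatch $e_0^3=e_0$ via the constant-term homomorphism where the paper calls it easy.
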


\begin{proof}
It is easy to see that \( e_0^3 = e_0 \), so it suffices to show that, for every \( 1 \le i \le n \), the relation \( e_i \in {\rm Nil}(R) \) is valid. Since \( e^3 = e \), we inspect that \( e_n \alpha^n(e_n) \alpha^{2n}(e_n) = 0 \). And because \( R \) is \(\alpha\)-compatible, \cite[Lemma 2.1]{hm} is applicable to get that \( e_n^3 = 0 \).

Now, set \( g := f - e_n x^n \). Since \( f^3 = f \) and \( e_n \in {\rm Nil}_*(R) \), we have \( g - g^3 \in {\rm Nil}_*(R)[x, \alpha] \), so \(\bar{g} = \bar{g}^3 \in R/{\rm Nil}_*(R)[x, \alpha] \). Thus, one verifies that
\[e_{n-1} \alpha^{n-1}(e_{n-1}) \alpha^{2n-2}(e_{n-1}) \in Nil_*(R).\] But, since \( R \) is an \(\alpha\)-compatible ring, \cite[Lemma 2.1]{hm} works to obtain that \( e_{n-1} \in {\rm Nil}(R) \). Continuing in this aspect, it can be shown that, for each \( 1 \le i \le n \), the condition \( e_i \in Nil(R) \) is fulfilled, as asked for.
\end{proof}

To specify the elements with an SDT representation of the ring \( R[x, \alpha] \), we need new notation. For convenience of the exposition, we just put the set of elements with an SDT representation in the ring \( R \) to be abbreviated as \( SDT(R) \).

\medskip

So, we have the validity of the following.

\begin{lemma}
Let \( R \) be a 2-primal and \(\alpha\)-compatible ring. Then, \[ SDT(R[x, \alpha]) \subseteq SDT(R) + Nil_*(R)[x, \alpha]x .\]
\end{lemma}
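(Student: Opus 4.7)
The plan is to take any $f \in SDT(R[x,\alpha])$, write its SDT representation $f = e + d$ with $e^3 = e$, $d \in \Delta(R[x,\alpha])$, $ed = de$, expand $e = \sum_{i=0}^{n} e_i x^i$ and $d = \sum_{j=0}^{m} d_j x^j$, and then show that the constant terms supply an SDT representation in $R$ while everything of positive degree lands in $Nil_*(R)[x,\alpha]x$.

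First I would apply Proposition~\ref{pro 2} to $e$: this gives $e_0^3 = e_0$ and $e_i \in \mathrm{Nil}(R)$ for every $1 \le i \le n$. Since $R$ is $2$-primal, $\mathrm{Nil}(R) = \mathrm{Nil}_*(R)$, so in fact $e_i \in \mathrm{Nil}_*(R)$ for $i \ge 1$. Next, Proposition~\ref{pro 1} together with the identification $\mathrm{Nil}_*(R[x,\alpha]) = \mathrm{Nil}_*(R)[x,\alpha]$ (invoked in its proof via \cite[Lemma 2.2]{cheni}) gives $d_0 \in \Delta(R)$ and $d_j \in \mathrm{Nil}_*(R)$ for every $1 \le j \le m$.

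The remaining ingredient is that $e_0$ and $d_0$ commute. This is extracted by comparing constant terms in the identity $ed = de$: the coefficient of $x^0$ in $ed$ is just $e_0 d_0$ (no $\alpha$ is applied because only the $i=j=0$ term contributes), and likewise the constant term of $de$ is $d_0 e_0$. Hence $e_0 d_0 = d_0 e_0$, so $g := e_0 + d_0$ is a bona fide SDT representation in $R$, giving $g \in SDT(R)$.

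Finally, I would write
\[
f \;=\; (e_0 + d_0) \;+\; \sum_{i \ge 1}(e_i + d_i)\, x^i,
\]
and since each $e_i + d_i$ for $i \ge 1$ lies in $\mathrm{Nil}_*(R)$ by the observations above, the tail belongs to $\mathrm{Nil}_*(R)[x,\alpha]x$. This yields the desired inclusion. There is no real obstacle here: Propositions~\ref{pro 1} and \ref{pro 2} already do the heavy lifting, and the $2$-primal hypothesis is precisely what is needed to merge $\mathrm{Nil}(R)$ (from the tripotent side) with $\mathrm{Nil}_*(R)$ (from the $\Delta$ side). The only point that needs a moment of care is the constant-term commutation, which is essentially automatic because $\alpha$ acts trivially on the degree-zero coefficient in the Ore product.
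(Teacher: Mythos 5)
Your proposal is correct and follows essentially the same route as the paper: apply Proposition~\ref{pro 2} to the tripotent part and Proposition~\ref{pro 1} to the $\Delta$ part, then split $f$ into its constant term and its positive-degree tail. In fact your explicit check that $e_0 d_0 = d_0 e_0$ by comparing constant terms of $ed = de$ is slightly more careful than the paper, which simply asserts this commutation as ``clear.''
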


\begin{proof}
Assume \( f = \sum_{i=0}^n f_i x^i \in SDT(R[x, \alpha]) \) and \( f = \sum_{i=0}^n e_i x^i + \sum_{i=0}^n d_i x^i \) is an SDT representation. In accordance with Propositions \ref{pro 1} and \ref{pro 2}, we have \( e_0 = e_0^3 \) and \( d_0 \in \Delta(R) \), and hence clearly \( e_0 d_0 = d_0 e_0 \), so that \( f_0 \in SDT(R) \).

Moreover, with the aid of Proposition \ref{pro 2}, for every \( 1 \le i \le n \), it must be that \( e_i, d_i \in {\rm Nil}_*(R) \), whence \( f_i = e_i + d_i \in {\rm Nil}_*(R) \), as required.
\end{proof}

The next affirmation is crucial.

\begin{lemma} \label{lemma 4}
Let \( R \) be an SDT ring. Then, \( R/J(R) \) is reduced.
\end{lemma}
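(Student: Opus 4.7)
The plan is to show $a^2\in J(R)\Rightarrow a\in J(R)$ for any $a\in R$, which is equivalent to $R/J(R)$ being reduced.

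First, since $J(R)\subseteq\Delta(R)$, the hypothesis gives $a^2\in\Delta(R)$, and Lemma~\ref{lemma 2} yields $a\in\Delta(R)$. (Concretely, writing $a=e+d$ as an SDT representation, the expansion $a^2=e^2+2ed+d^2$ combined with Lemma~\ref{lemma 1} and the subring property of $\Delta(R)$ forces $e^2\in\Delta(R)\cap\mathrm{Id}(R)=\{0\}$, so $e=0$ and $a=d\in\Delta(R)$.)

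Next, I exploit the two-sided ideal property of $J(R)$: for every $x\in R$, $(axa)^2=a(xa^2x)a\in J(R)\subseteq\Delta(R)$, so Lemma~\ref{lemma 2} applied to $axa$ yields $axa\in\Delta(R)$ for every $x\in R$.

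To conclude $a\in J(R)$ it suffices to show $1-ra\in U(R)$ for every $r\in R$. Decompose $r=e_r+d_r$ as an SDT representation; then $ra=e_ra+d_ra$ with $d_ra\in\Delta(R)$, so $u:=1-d_ra\in U(R)$ and $1-ra=u(1-u^{-1}e_ra)$. Setting $y:=u^{-1}e_ra$, I aim to prove $y\in\Delta(R)$. Computing $y^2=u^{-1}e_r\cdot(au^{-1}e_ra)$, the inner factor $au^{-1}e_ra$ is of the form $axa$ (with $x=u^{-1}e_r$), hence lies in $\Delta(R)$ by the previous step. Combining with Lemma~\ref{lemma 1} applied to the tripotent $e_r$ and this $\Delta$-element, and using closure of $\Delta(R)$ under multiplication by units, one deduces $y^2\in\Delta(R)$. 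A final application of Lemma~\ref{lemma 2} gives $y\in\Delta(R)$, completing the argument.

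The main obstacle is in the last step: Lemma~\ref{lemma 1} supplies only $2e_r c\in\Delta(R)$ (not $e_r c$ itself) for a $\Delta$-element $c$. When $2\in U(R)$ this factor cancels; otherwise one has to treat the characteristic-$2$ case separately, using the identity $(e_r-e_r^2)^2=0$ together with the congruence $e_r c\equiv e_r^2 c\pmod{\Delta(R)}$ (coming from Lemma~\ref{lemma 1}), combined with further absorption from the second step, to conclude $y^2\in\Delta(R)$ without dividing by $2$. This bookkeeping is the essential computational difficulty.
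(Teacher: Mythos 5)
Your opening moves are sound and partly parallel the paper: the reduction ``$a^2\in J(R)\Rightarrow a\in J(R)$'', the use of Lemma~\ref{lemma 2} to get $a\in\Delta(R)$, and the observation that $(axa)^2=a(xa^2x)a\in J(R)$ gives $axa\in\Delta(R)$ are all correct. The factorization $1-ra=u(1-u^{-1}e_ra)$ with $u=1-d_ra\in U(R)$ is also fine. But the proof is not complete, and you have located the gap yourself: everything hinges on $e_rc\in\Delta(R)$ for a tripotent $e_r$ and the $\Delta$-element $c=au^{-1}e_ra$, and Lemma~\ref{lemma 1} only yields $2e_rc$ and $(e_r\pm e_r^2)c$ in $\Delta(R)$. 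When $2\in U(R)$ you can cancel the $2$ and your argument closes, but the lemma is needed precisely when $2$ is not invertible: in the paper it is applied in Theorem~\ref{boolean and yaqub} to the factor where $2=0$, and in Lemma~\ref{lemma 5} to an arbitrary SDT ring. The sketched repair does not work as stated. First, $(e_r-e_r^2)^2=-2(e_r-e_r^2)$, which is not zero for a general tripotent, so that identity is unavailable outside the very case you are trying to handle. Second, after replacing $e_rc$ by $e_r^2c\pmod{\Delta(R)}$ you are left with $e_r^2c=e_r^2au^{-1}e_ra$, which is \emph{not} of the form $axa$, and ``absorption'' of a $\Delta$-factor sitting in the middle of a product is illegitimate because $\Delta(R)$ is not an ideal --- it is only closed under addition, internal multiplication, and multiplication by units. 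So the case $2\notin U(R)$ remains genuinely open in your write-up.

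For comparison, the paper sidesteps the problematic product $e\cdot(\Delta\text{-element})$ entirely. It shows $(1-ra)(1+ar)\in U(R)$ by proving that the commutator $ar-ra$ lies in $\Delta(R)$: writing $r=e+d$, the terms $ad-da$ are handled because $a,d\in\Delta(R)$, and for $ae-ea$ one uses that $e^2a(1-e^2)$ and $(1-e^2)ae^2$ are square-zero, hence in $\Delta(R)$ by Lemma~\ref{lemma 2}, giving $e^2a-ae^2\in\Delta(R)$, and then Lemma~\ref{lemma 1} applied with the $\Delta$-element $a$ itself supplies $a(e+e^2),(e+e^2)a\in\Delta(R)$; at no point is a product $ea$ with a bare tripotent coefficient required. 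If you want to salvage your factorization route, you would need to switch to such a commutator-type computation at the final step; as it stands, your proposal proves the lemma only under the extra hypothesis $2\in U(R)$.
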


\begin{proof}
Assume \( x^2 \in J(R) \subseteq \Delta(R) \). Thus, by Lemma \ref{lemma 2}, we have \( x \in \Delta(R) \). Let \( r \in R \). Since \( 1 - r^2x^2 \in U(R) \), we may set \( u := 1 - rx^2r \in U(R) \). Therefore,

\[
(1 - rx)(1 + rx) = 1 - rx + xr - rx^2r = xr - rx + u.
\]

It suffices to show that \( xr - rx \in \Delta(R) \). To this goal, assume \( r = e + d \) is an SDT representation. Then,

\[
xr - rx = x(e + d) - (e + d)x = xe - ex + (xd - dx),
\]

and as \( x, d \in \Delta(R) \), it is just sufficient to prove that \( xe - ex \in \Delta(R) \).

Since \[\left[ e^2x(1 - e^2) \right]^2 = 0 = \left[ (1 - e^2)xe^2 \right]^2,\] Lemma \ref{lemma 2} assures that

\[
\begin{cases}
e^2x(1 - e^2) \in \Delta(R) \Longrightarrow e^2x - e^2xe^2 \in \Delta(R).\\
(1 - e^2)xe^2 \in \Delta(R) \Longrightarrow xe^2 - e^2xe^2 \in \Delta(R).
\end{cases}
\]

However, because \(\Delta(R)\) is closed under addition, we arrive at \( e^2x - xe^2 \in \Delta(R) \). Consequently,

\[
xe - ex =xe+xe^2-xe^2-ex-e^2x+e^2x= e^2x-xe^2+x(e + e^2)-(e + e^2)x \in \Delta(R).
\]

Hence,

\[
(1 - rx)(1 + xr) \in U(R).
\]

But \( R \) was arbitrary, and so \( x \in J(R) \), as needed.
\end{proof}

Given the truthfulness of Lemma \ref{lemma 3}, we have that, for every SDT ring \( R \), \( 6 = 2^3 - 2 \in \Delta(R) \). This raises a logical question: if \( R \) is an SDT ring, is \( 6 \in J(R) \)? We will answer this query in the following lemma.

\begin{lemma}\label{lemma 5}
Let \( R \) be an SDT ring. Then, \( 6 \in J(R) \).
\end{lemma}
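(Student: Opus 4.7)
The plan is to promote the membership $6\in\Delta(R)$ (which falls out of Lemma \ref{lemma 3} applied to $a=2$) to the stronger $6\in J(R)$. The key idea is to show that $6x\in\Delta(R)$ for every $x\in R$; since $\Delta(R)$ is a subring, it is closed under addition and subtraction, and one can then take $u=1$ in the defining property of $\Delta(R)$ to conclude that $1-6x\in U(R)$ for every $x\in R$, which is exactly the definition of $6\in J(R)$.

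The main step, then, is to show that $6x\in\Delta(R)$ for each $x\in R$. For this, I would apply Lemma \ref{lemma 3} twice:
\begin{align*}
x - x^3 &\in \Delta(R) \quad\text{(from } a = x\text{),}\\
2x - 8x^3 &\in \Delta(R) \quad\text{(from } a = 2x\text{).}
\end{align*}
Because $\Delta(R)$ is an additive subgroup, the multiple $8(x-x^3)=8x-8x^3$ lies in $\Delta(R)$, and hence so does the difference
\[
(8x - 8x^3) - (2x - 8x^3) = 6x.
\]

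Finally, I would invoke the original description $\Delta(R)=\{y\in R : 1-yu\in U(R)\ \text{for all}\ u\in U(R)\}$: specializing to $u=1$ and $y=6x$ gives $1-6x\in U(R)$ for every $x\in R$, which is precisely the definition of $6\in J(R)$. There is no real obstacle here: once Lemma \ref{lemma 3} is in hand, the proof is a three-line arithmetic manipulation together with the fact that $\Delta(R)$ is a subring, together with the bootstrap from $\Delta$ to $J$ via the $u=1$ substitution.
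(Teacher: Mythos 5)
Your proof is correct, and it takes a genuinely different and in fact shorter route than the paper's. You apply Lemma \ref{lemma 3} to both $x$ and $2x$ and take the integer combination $8(x-x^3)-(2x-8x^3)=6x$, which lies in $\Delta(R)$ because $\Delta(R)$ is an additive subgroup; since $6$ is central, $1-r\cdot 6=1-6r\in U(R)$ for every $r\in R$, which is the standard characterization of $6\in J(R)$. The paper instead first extracts $6\in\Delta(R)$ from Lemma \ref{lemma 3} with $a=2$, then proves the weaker statement $12\in J(R)$ by decomposing an arbitrary $r=e+d$ as an SDT representation and using Lemma \ref{lemma 1} to see that $12e=2e\cdot 6\in\Delta(R)$, and finally upgrades to $6\in J(R)$ by noting $6^2=36=3\cdot 12\in J(R)$ and invoking the reducedness of $R/J(R)$ from Lemma \ref{lemma 4}. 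Your argument dispenses with Lemmas \ref{lemma 1} and \ref{lemma 4} and with any appeal to an SDT decomposition, needing only Lemma \ref{lemma 3} and the fact that $\Delta(R)$ is a subring; what you establish along the way, namely $6R\subseteq\Delta(R)$, is exactly the ideal-theoretic content that forces $6$ into $J(R)$.
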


\begin{proof}
Invoking Lemma \ref{lemma 3}, we know that \( 6 \in \Delta(R) \), which implies \( 12 = 6 + 6 \in \Delta(R) \). Letting \( r \in R \) be arbitrary, and letting \( r = e + d \) be an SDT representation, Lemma \ref{lemma 1} ensures that

\[
1 - 12r = 1 - 12e - 12d = 1 - 2(6e) - 12d \in 1 + \Delta(R) \subseteq U(R).
\]

Thus, \( 12 \in J(R) \).

Furthermore, since \( 6^2 = 36 = 3 \times 12 \in J(R) \), Lemma \ref{lemma 4} helps us to conclude that \( 6 \in J(R) \), as stated.
\end{proof}

As a useful consequence, we deduce the following.

\begin{corollary}\label{cor 1}
Let $R$ be an SDT ring. Then, the following two points hold:

(1) $2 \in U(R)$ if, and only if, $3 \in J(R)$.

(2) $3 \in U(R)$ if, and only if, $2 \in J(R)$.
\end{corollary}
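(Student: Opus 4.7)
The plan is to deduce both biconditionals directly from Lemma \ref{lemma 5}, which tells us $6 \in J(R)$. The key point is that $J(R)$ is a two-sided ideal closed under multiplication by arbitrary ring elements (including units), and that $1 + J(R) \subseteq U(R)$.

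For part (1), in the forward direction I assume $2 \in U(R)$. Then since $6 = 2 \cdot 3 \in J(R)$ and $J(R)$ is an ideal, multiplying by $2^{-1}$ gives $3 = 2^{-1} \cdot 6 \in J(R)$. For the converse, if $3 \in J(R)$, then $-2 = 1 - 3 \in 1 + J(R) \subseteq U(R)$, whence $2 \in U(R)$.

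For part (2) the argument is symmetric: if $3 \in U(R)$, then $2 = 3^{-1} \cdot 6 \in J(R)$ because $6 \in J(R)$; conversely, if $2 \in J(R)$, then $3 = 1 + 2 \in 1 + J(R) \subseteq U(R)$.

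There is no serious obstacle here; the entire proof is a short bookkeeping argument once Lemma \ref{lemma 5} is in hand. The only thing to be careful about is not accidentally appealing to $\Delta(R)$ being an ideal (it generally is not), but we don't need that since we work inside $J(R)$ throughout.
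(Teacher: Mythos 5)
Your proof is correct and follows exactly the route the paper intends: the paper omits the details, stating only that the claim is "straightforward being based on Lemma \ref{lemma 5}," and your argument is precisely that bookkeeping — dividing $6\in J(R)$ by the assumed unit for one direction, and using $1+J(R)\subseteq U(R)$ for the other. Nothing further is needed.
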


\begin{proof}
The proof is pretty straightforward being based on Lemma \ref{lemma 5}, so we leave it voluntarily.
\end{proof}

The next two assertions are worthy of documentation.

\begin{proposition}\label{pro 3}
Let $R$ be an SDT ring such that $2 \in U(R)$. Then, $\Delta(R)$ is an ideal. In particular, under these conditions, $\Delta(R) = J(R)$.
\end{proposition}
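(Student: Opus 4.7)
The plan is to show directly that $\Delta(R)$ is a two-sided ideal; the second assertion then follows immediately from the characterization cited in the introduction that $\Delta(R)$ is an ideal of $R$ precisely when $\Delta(R)=J(R)$.

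To show $\Delta(R)$ is a left ideal, I would fix $d \in \Delta(R)$ and an arbitrary $r \in R$, and exploit the SDT hypothesis to write $r = e + d'$ with $e^3 = e$, $d' \in \Delta(R)$, and $ed' = d'e$. Then
\[
rd = ed + d'd.
\]
The second summand $d'd$ lies in $\Delta(R)$ because $\Delta(R)$ is a subring (as recalled from \cite{lmr}). For the first summand, Lemma \ref{lemma 1} gives $2ed \in \Delta(R)$. This is where the hypothesis $2 \in U(R)$ comes in: since $\Delta(R)$ is closed under multiplication by units of $R$, we can multiply by $2^{-1}$ to conclude
\[
ed = 2^{-1}(2ed) \in \Delta(R).
\]
Hence $rd \in \Delta(R)$. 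A symmetric argument, using $d \cdot r = de + dd'$ together with $2de \in \Delta(R)$ from Lemma \ref{lemma 1} and the closure of $\Delta(R)$ under right multiplication by units, yields $dr \in \Delta(R)$. Thus $\Delta(R)$ is a two-sided ideal.

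For the ``in particular'' clause, the introduction states (citing \cite[Theorems 3 and 6]{lmr}) that $\Delta(R)$ is an ideal of $R$ exactly when $\Delta(R) = J(R)$. Combined with the always-valid inclusion $J(R) \subseteq \Delta(R)$, the ideal property we just established forces $\Delta(R) = J(R)$, completing the proof.

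There is no real obstacle here: the whole argument is a short application of Lemma \ref{lemma 1} together with the SDT decomposition of an arbitrary element. The only subtle point worth double-checking is that the two-sided closure of $\Delta(R)$ under unit multiplication (rather than just one-sided) is genuinely needed, which is why one must run the argument symmetrically for $dr$.
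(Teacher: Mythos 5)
Your proof is correct and uses essentially the same ingredients as the paper's: decompose $r = e + d'$ via the SDT hypothesis, get $2ed \in \Delta(R)$ from Lemma~\ref{lemma 1}, divide by the unit $2$ using the closure of $\Delta(R)$ under multiplication by units, and absorb $d'd$ via the subring property. Your version is in fact slightly cleaner, since the paper additionally takes an SDT representation of $rd$ itself and argues its tripotent part vanishes, a detour that your direct computation $rd = ed + d'd \in \Delta(R)$ shows to be unnecessary.
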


\begin{proof}
Since $\Delta(R)$ is closed under addition, it is sufficient to show that, for any $d \in \Delta(R)$ and $r \in R$, the relations $rd, dr \in \Delta(R)$ are valid. Assume, for this aim, that $rd = e + b$ and $r = f + b'$ are two SDT representations. Exploiting Lemma \ref{lemma 1}, we know $2fd \in \Delta(R)$. Since $2 \in U(R)$, \cite[Lemma 1(2)]{lmr} insures that $fd \in \Delta(R)$. So, we have

$$rd = e + b = fd + b'd  \Longrightarrow  e - fd = b'd - b \in \Delta(R).$$

\noindent But, since $fd \in \Delta(R)$, it follows that $e \in \Delta(R)$, so $e^2 \in \Delta(R) \cap Id(R) = \{0\}$, which forces $e = 0$. Therefore, $rd = b \in \Delta(R)$. Similarly, it can be shown that $dr \in \Delta(R)$, guaranteeing the claim.
\end{proof}

\begin{proposition}\label{pro 4}
Let \(R\) be an SDT ring with \(3 \in U(R)\). Then, for any \(a \in R\), we have \(a = f + b\), where \(f = f^2 \in R\), \(b \in \Delta(R)\) and \(fb = bf\).
\end{proposition}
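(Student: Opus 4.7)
The proof proposal: start with an SDT representation $a = e + d$ where $e^3 = e$, $d \in \Delta(R)$, and $ed = de$, and aim to replace the tripotent $e$ by the idempotent $e^2$ at the cost of absorbing $e - e^2$ into $\Delta(R)$. Concretely, the plan is to put $f := e^2$ (which is automatically idempotent since $(e^2)^2 = e^4 = e \cdot e^3 = e^2$) and $b := (e - e^2) + d$, and then verify the three required properties: $f$ is idempotent, $b \in \Delta(R)$, and $fb = bf$.

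The key obstacle, and really the only nontrivial step, is showing that $e - e^2 \in \Delta(R)$. I would argue this using the hypothesis $3 \in U(R)$ together with Corollary~\ref{cor 1} and Lemma~\ref{lemma 4}. First, Corollary~\ref{cor 1}(2) yields $2 \in J(R)$. A direct computation gives
\[
(e - e^2)^2 = e^2 - 2e^3 + e^4 = 2e^2 - 2e = -2(e - e^2),
\]
so $(e - e^2)^2 \in 2R \subseteq J(R)$. Since Lemma~\ref{lemma 4} tells us $R/J(R)$ is reduced, the image $\overline{e - e^2}$ is nilpotent and hence zero, so $e - e^2 \in J(R) \subseteq \Delta(R)$.

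Once this is established, the rest is routine. Since $\Delta(R)$ is closed under addition, $b = (e - e^2) + d \in \Delta(R)$. To check commutativity, compute
\[
fb = e^2(e - e^2) + e^2 d = (e - e^2) + e^2 d,
\qquad
bf = (e - e^2)e^2 + d e^2 = (e - e^2) + d e^2,
\]
using $e^3 = e$ and $e^4 = e^2$. Since $ed = de$ implies $e^2 d = e(ed) = e(de) = (ed)e = (de)e = de^2$, we conclude $fb = bf$. Therefore $a = e + d = e^2 + (e - e^2) + d = f + b$ is the desired decomposition, completing the argument.
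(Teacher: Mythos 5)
Your proof is correct and follows essentially the same route as the paper: both hinge on the observation that $(e-e^2)^2=-2(e-e^2)\in J(R)$ (since $3\in U(R)$ forces $2\in J(R)$ by Corollary~\ref{cor 1}) together with the reducedness of $R/J(R)$ from Lemma~\ref{lemma 4}, and both take $e^2$ as the idempotent part. Your write-up is in fact a slightly more streamlined version of the paper's argument, which detours through $a-a^2$ before arriving at the same decomposition.
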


\begin{proof}
Suppose \(a = f + d\) is an SDT representation. Then, \[a - a^2 = (f - f^2) + (d - 2fd - d^2).\] Since \(3 \in U(R)\) by Corollary \ref{cor 1}, we get \(2 \in J(R)\). Thus, \((f - f^2)^2 = -2(f - f^2) \in J(R)\) and, with Lemma \ref{lemma 5} at hand, we observe that \(f - f^2 \in J(R)\). This gives \(a - a^2 \in \Delta(R)\).

On the other hand, since \[a - f^2 = (a - a^2) + 2(a^2 - f^2 - fd) - d^2 \in \Delta(R),\] by setting \(e := f^2\) we finish the proof after all.
\end{proof}

A ring \(R\) is called an {\it SDI ring} if, for every \(r \in R\), there exist \(e = e^2 \in R\) and \(b \in \Delta(R)\) such that \(r = e + b\) and \(eb = be\). Recall also that a ring is called a {\it \(\Delta U\) ring}, provided \(1 + \Delta(R) = U(R)\) (see \cite{kkqt}).

\medskip

The following closely related results are of some interest as well.

\begin{lemma}
Every SDI ring is a \(\Delta U\) ring.
\end{lemma}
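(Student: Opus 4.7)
The plan is to verify the two inclusions $1+\Delta(R)\subseteq U(R)$ and $U(R)\subseteq 1+\Delta(R)$ separately. The first is immediate from the very definition of $\Delta(R)$: specializing $u=1$ in the condition $x+u\in U(R)$ shows that $1+d$ is a unit for every $d\in\Delta(R)$, so no work is needed there. The substance of the lemma lies in the reverse inclusion.

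For that, I would pick an arbitrary $u\in U(R)$ and invoke the SDI hypothesis to write $u=e+b$ with $e=e^2$, $b\in\Delta(R)$, and $eb=be$. The strategy is to force $e=1$, which would then immediately give $u-1=b\in\Delta(R)$. To eliminate the idempotent part, I would multiply the decomposition on the right by $1-e$; the commutativity $eb=be$ ensures that $(e+b)(1-e)=b(1-e)$, so that
\[
u(1-e)=b(1-e).
\]
Multiplying on the left by $u^{-1}$ rearranges this into $(1-u^{-1}b)(1-e)=0$.

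At this point I would use the fact, recorded as \cite[Lemma 1(2)]{lmr} and already applied several times in the excerpt, that $\Delta(R)$ is closed under left multiplication by units; hence $u^{-1}b\in\Delta(R)$. Applying the easy inclusion established at the start, $1-u^{-1}b\in 1+\Delta(R)\subseteq U(R)$, and cancelling it in $(1-u^{-1}b)(1-e)=0$ yields $1-e=0$, that is $e=1$. Therefore $u=1+b\in 1+\Delta(R)$, as required. I do not foresee a serious obstacle: the only real trick is recognising that right-multiplication by $1-e$ kills the idempotent piece, after which the closure of $\Delta(R)$ under multiplication by units finishes the job automatically.
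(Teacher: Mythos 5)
Your proof is correct, and it reaches the same pivotal conclusion as the paper --- namely that the idempotent in the SDI decomposition of a unit must equal $1$ --- but by a genuinely different mechanism. The paper's argument is a one-liner: from $u=e+d$ one gets $e=u-d=u+(-d)\in U(R)+\Delta(R)\subseteq U(R)$ directly from the definition of $\Delta(R)$ (and the fact that $\Delta(R)$ is closed under negation), and then $e\in U(R)\cap \mathrm{Id}(R)=\{1\}$ since an invertible idempotent is $1$. Your route instead right-multiplies by $1-e$ to obtain $(1-u^{-1}b)(1-e)=0$ and then cancels the unit $1-u^{-1}b$; this is valid, and it correctly invokes \cite[Lemma 1(2)]{lmr} for $u^{-1}b\in\Delta(R)$ and the trivial inclusion $1+\Delta(R)\subseteq U(R)$ for invertibility. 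Two small remarks: the commutativity $eb=be$ plays no role in your computation --- $(e+b)(1-e)=b(1-e)$ already follows from $e^2=e$ alone --- so that appeal should be dropped; and your argument uses one more external ingredient (closure of $\Delta(R)$ under multiplication by units) than the paper's, which needs only the defining property of $\Delta(R)$. What your version buys is a template that works whenever one can annihilate the idempotent part by a one-sided multiplication, but here the direct observation that $e$ is simultaneously a unit and an idempotent is shorter.
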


\begin{proof}
Suppose \(u \in U(R)\) and \(u = e + d\) is an SDI representation. Then, we have \[e = u - d \in U(R) + \Delta(R) \subseteq U(R) \cap Id(R) = \{1\},\] as required.
\end{proof}

\begin{lemma}\cite[Proposition 2.3]{kkqt} \label{sum unit}
The ring $R$ is a $\Delta U$ ring if, and only if, $U(R) + U(R) \subseteq \Delta(R)$; and then, $U(R) + U(R) = \Delta(R)$.
\end{lemma}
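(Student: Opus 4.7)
The plan is to handle the two directions of the biconditional separately and then extract the ``then'' equality as a short corollary of the forward direction.

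For the forward direction, suppose $R$ is $\Delta U$, i.e.\ $1+\Delta(R)=U(R)$. Given any two units $u,v\in U(R)$, I would write $u=1+d_1$ and $v=1+d_2$ with $d_1,d_2\in\Delta(R)$, so that $u+v=2+d_1+d_2$. The only subtle point is showing $2\in\Delta(R)$: since $-1\in U(R)$, the $\Delta U$ hypothesis gives $-1=1+d$ for some $d\in\Delta(R)$, forcing $-2=d\in\Delta(R)$, and hence $2\in\Delta(R)$ because $\Delta(R)$ is a subring (closed under negation). Then, using that $\Delta(R)$ is closed under addition, I conclude $u+v\in\Delta(R)$, giving $U(R)+U(R)\subseteq\Delta(R)$.

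For the converse, suppose $U(R)+U(R)\subseteq\Delta(R)$. The inclusion $1+\Delta(R)\subseteq U(R)$ is immediate from the very definition of $\Delta(R)$, applied with the unit $u=1$. For the reverse inclusion $U(R)\subseteq 1+\Delta(R)$, take any $u\in U(R)$ and write $u-1=u+(-1)$; this is a sum of two units, hence lies in $\Delta(R)$ by hypothesis, so $u\in 1+\Delta(R)$, as required.

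For the last clause, assuming $R$ is $\Delta U$, the inclusion $U(R)+U(R)\subseteq\Delta(R)$ has already been established. For the reverse inclusion, given $d\in\Delta(R)$, the $\Delta U$ assumption gives $1+d\in U(R)$, and then $d=(1+d)+(-1)\in U(R)+U(R)$, yielding the desired equality $U(R)+U(R)=\Delta(R)$.

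There is really no serious obstacle here; the only place one might stumble is the forward direction, where one must remember to use $-1\in U(R)$ to secure $2\in\Delta(R)$ before invoking additive closure. Once that small trick is in place, both implications are one-line verifications.
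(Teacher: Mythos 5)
Your proof is correct and complete. The paper does not actually prove this lemma --- it is quoted from \cite[Proposition 2.3]{kkqt} --- so there is no internal argument to compare against; your verification is the natural one, and the only properties of $\Delta(R)$ you invoke (closure under addition and negation, and $1+\Delta(R)\subseteq U(R)$ from the definition) are exactly those the paper records from \cite{lmr} in its introduction.
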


Recall that a ring $R$ is said to be {\it uniquely clean}, provided that each element in $R$ has a unique representation as the sum of an idempotent and a unit (see \cite{nzu}).

\medskip

The next valuable consequence gives some transversal between the notions of SDI rings and unique cleanness.

\begin{corollary}\label{cor 2}
Let $R$ be a ring. Then, the following are equivalent:

(1) $R$ is uniquely clean.

(2) $R$ is SDI with all idempotents central.
\end{corollary}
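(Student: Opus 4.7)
The plan is to use the Nicholson--Zhou characterization of uniquely clean rings: $R$ is uniquely clean if and only if $R/J(R)$ is Boolean, idempotents lift modulo $J(R)$, and all idempotents of $R$ are central. Under these hypotheses $U(R) = 1 + J(R)$ automatically, because in the Boolean ring $R/J(R)$ the only element that is both idempotent and a unit is $\bar 1$.

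For $(1) \Rightarrow (2)$, assume $R$ is uniquely clean. All idempotents are central by Nicholson--Zhou, and from $U(R) = 1 + J(R)$ I would first deduce $\Delta(R) = J(R)$: any $x \in \Delta(R)$ satisfies $x + 1 \in U(R) = 1 + J(R)$, hence $x \in J(R)$, while the reverse containment is built into the definition of $\Delta(R)$. Given any $r \in R$, Booleanness of $R/J(R)$ together with the lifting of idempotents produces $e = e^{2} \in R$ with $r - e \in J(R) = \Delta(R)$; centrality of $e$ then forces $e(r-e) = (r-e)e$, and this is the desired SDI representation.

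For $(2) \Rightarrow (1)$, assume $R$ is SDI with every idempotent central. The preceding lemma gives $U(R) = 1 + \Delta(R)$. The central step I expect to be the main obstacle is proving $\Delta(R) \subseteq J(R)$. I would fix $d \in \Delta(R)$ and an arbitrary $r \in R$, and SDI-decompose $r = e + b$ with $e$ a central idempotent and $b \in \Delta(R)$. Using $1 = e + (1-e)$ together with centrality,
\[
1 - ed = (1-e) + e(1-d),
\]
which is a unit with inverse $(1-e) + e(1-d)^{-1}$, since $1 - d \in U(R)$. Because $bd \in \Delta(R)$, the factorization $1 - rd = (1 - ed)\bigl(1 - (1 - ed)^{-1} b d\bigr)$ displays $1 - rd$ as a unit, so $d \in J(R)$.

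Once $\Delta(R) = J(R)$ is in hand, the SDI decomposition of any $r \in R$ gives $\bar r = \bar e$ in $R/J(R)$, so $R/J(R)$ is Boolean and idempotents lift. Uniqueness of the clean decomposition is the final ingredient: if $r = e_{1} + u_{1} = e_{2} + u_{2}$ with $e_{i} \in Id(R)$ and $u_{i} \in U(R)$, then $u_{i} \in 1 + \Delta(R) = 1 + J(R)$, so $e_{1} - e_{2} \in J(R)$. Centrality forces $(e_{1} - e_{2})^{3} = e_{1} - e_{2}$, and any tripotent $a \in J(R)$ vanishes because $a(1 - a^{2}) = 0$ with $1 - a^{2} \in U(R)$. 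Hence $e_{1} = e_{2}$ and consequently $u_{1} = u_{2}$, giving unique cleanness. The hardest part is the inclusion $\Delta(R) \subseteq J(R)$ in the reverse direction: the centrality hypothesis is exactly what enables the Peirce splitting that delivers invertibility of $1 - ed$, and the argument genuinely breaks down without it.
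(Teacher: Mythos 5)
Your proof is correct, and for $(1)\Rightarrow(2)$ it is essentially the paper's argument: both invoke the Nicholson--Zhou facts that idempotents in a uniquely clean ring are central and that every $a$ is congruent mod $J(R)$ to an idempotent, then use $J(R)\subseteq\Delta(R)$ (your extra observation that $\Delta(R)=J(R)$ holds here is true but not needed). For $(2)\Rightarrow(1)$ the routes genuinely diverge. The paper never establishes $\Delta(R)\subseteq J(R)$: existence of a clean decomposition comes in one line by SDI-decomposing $a+1=e+d$ and writing $a=e+(d-1)$ with $d-1\in U(R)$, and uniqueness comes from the $\Delta U$ identity $U(R)+U(R)=\Delta(R)$ of Lemma~\ref{sum unit}, which puts $e-f=v-u$ in $\Delta(R)$, whence the central tripotent $e-f$ has idempotent square in $\Delta(R)\cap\mathrm{Id}(R)=\{0\}$. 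You instead prove the stronger fact $\Delta(R)=J(R)$ via the Peirce-type factorization $1-rd=(1-ed)\bigl(1-(1-ed)^{-1}bd\bigr)$ (which checks out, using that $\Delta(R)$ is a subring closed under multiplication by units --- cite Leroy--Matczuk for that), then pass to the Boolean quotient and argue uniqueness from $U(R)=1+J(R)$. Your detour costs more work but buys the extra conclusion $\Delta(R)=J(R)$ for SDI rings with central idempotents, a statement the paper only reaches under different hypotheses in Proposition~\ref{pro 3}. One small point to make explicit: you jump from ``$R/J(R)$ is Boolean and idempotents lift'' straight to uniqueness without exhibiting a clean decomposition; the construction $a=e+(d-1)$ from the SDI decomposition of $a+1$ closes that gap cleanly.
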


\begin{proof}
$(1) \Rightarrow (2)$. Assume \(R\) is a uniquely clean ring. Consulting with \cite[Lemma 4]{nzu}, every idempotent in \(R\) is central. Besides, by virtue of \cite[Theorem 20]{nzu}, for every \(a \in R\), there exists a unique idempotent \(e\) such that \(a - e \in J(R) \subseteq \Delta(R)\). Thus, there exists \(d \in \Delta(R)\) such that \(a = e + d\). Since all idempotents are central, we have \(ed = de\).

$(2) \Rightarrow (1)$. Assume \(R\) is an SDI ring, and let \(a \in R\) be arbitrary. Suppose \(a+1 = e + d\) is an SDI representation. Then, \(a = e + (d-1)\), which is a clean representation. Assume now that \(e + u = f + v\) are two clean representations. So, Lemma \ref{sum unit} informs us that \(e - f = v - u \in \Delta(R)\). Since all idempotents are central, we find \(e - f = (e-f)^3\), and so \((e-f)^2 \in \Delta(R) \cap Id(R) = \{0\}\). Therefore, \(e - f = (e-f)^3 = (e-f)(e-f)^2 = 0\). Hence, \(e = f\), as it must be.
\end{proof}

\section{The Main Characterization}

We start our considerations here with some relationships between certain classes of rings.

\begin{proposition}\label{local} Suppose $R$ is an SDT ring and a domain. Then, $R$ is a local ring.
\end{proposition}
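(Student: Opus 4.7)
The plan is to show that the set of non-units of $R$ forms an ideal, from which it follows immediately that $R$ is local. The key observation is that a domain has very few tripotents: the equation $e^3 = e$ factors as $e(e-1)(e+1) = 0$, so in a domain the only solutions are $e \in \{0, 1, -1\}$.

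Combining this with the SDT hypothesis, every element of $R$ lies in one of the three cosets $\Delta(R)$, $1 + \Delta(R)$, or $-1 + \Delta(R)$. By the very definition of $\Delta(R)$, each element $1 + d$ with $d \in \Delta(R)$ is a unit; since $-d \in \Delta(R)$ too, the same reasoning gives $-1 + \Delta(R) \subseteq U(R)$. I would then observe that $\Delta(R) \cap U(R) = \emptyset$: if $d$ were simultaneously in $\Delta(R)$ and $U(R)$, then $-d \in U(R)$ as well, and the defining property of $\Delta(R)$ applied to the unit $-d$ would force $0 = d + (-d) \in U(R)$, which is impossible in a nontrivial ring. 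Consequently the three cosets above partition $R$, with $U(R) = (1 + \Delta(R)) \cup (-1 + \Delta(R))$ and the non-units of $R$ equal to $\Delta(R)$.

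It then remains to verify that $\Delta(R)$ is an ideal. Since $\Delta(R)$ is already a subring, it is closed under addition; for absorption under multiplication by an arbitrary $r \in R$, I would split into cases: if $r \in U(R)$, then $r\Delta(R)$ and $\Delta(R)r$ are contained in $\Delta(R)$ by \cite[Lemma 1(2)]{lmr}, while if $r \in \Delta(R)$, the same containments follow from the subring property. Hence $\Delta(R)$ is a two-sided ideal coinciding with the set of non-units, so $R$ is local with maximal ideal $\Delta(R)$. I do not anticipate a serious obstacle; the whole argument hinges on the domain hypothesis cutting the tripotents down to $\{0, \pm 1\}$ and the trick $0 = d + (-d)$ excluding units from $\Delta(R)$.
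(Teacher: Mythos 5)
Your proof is correct and follows essentially the same route as the paper: the domain hypothesis cuts the tripotents down to $\{0,\pm 1\}$, whence $R=\Delta(R)\cup(1+\Delta(R))\cup(-1+\Delta(R))\subseteq \Delta(R)\cup U(R)$. The paper stops at $R=U(R)\cup\Delta(R)$ and declares the passage to locality ``easily shown,'' whereas you actually supply that step by verifying $\Delta(R)\cap U(R)=\emptyset$ and that $\Delta(R)$ is then a proper two-sided ideal equal to the set of non-units; this is a welcome completion of the argument rather than a different approach.
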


\begin{proof} Let $a \in R$. We want to show that either $a \in U(R)$ or $a \in \Delta(R)$. To that end, suppose $a = e + d$ is an SDT representation. If $e = 0$, then $a = d \in \Delta(R)$. If $e \neq 0$, then as $e^3 = e$ it must be $e(1-e^2) = 0$. But, since $R$ is a domain, $(1-e)(1+e) = 1-e^2 = 0$, so either $e = 1$ or $e = -1$. Therefore, either $a = 1 + d \in U(R)$ or $a = -1 + d \in U(R)$. It next can easily be shown that $R$ is a local ring if, and only if, $R = U(R) \cup \Delta(R)$, as required.
\end{proof}

As an immediate consequence, we yield:

\begin{corollary} Suppose $R$ is a strongly 2-nil clean and local ring. Then, $R$ is an SDT ring.
\end{corollary}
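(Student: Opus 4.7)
The plan is very short: the result follows almost immediately from the inclusion $\operatorname{Nil}(R)\subseteq J(R)\subseteq\Delta(R)$ in a local ring. Concretely, given any $a\in R$, I would start by invoking the strongly $2$-nil clean hypothesis to write $a=e+n$ with $e^{3}=e$, $n\in\operatorname{Nil}(R)$ and $en=ne$. This is already almost the shape of an SDT representation; the only thing to verify is that $n\in\Delta(R)$.

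To handle this, I would first note that nilpotents are never units in a nonzero ring (otherwise $1$ would be nilpotent), so $\operatorname{Nil}(R)\subseteq R\setminus U(R)$. Since $R$ is local, $R\setminus U(R)$ equals the unique maximal ideal, which is $J(R)$. Combining this with the inclusion $J(R)\subseteq\Delta(R)$ recorded in the discussion following the definition of $\Delta(R)$, I obtain $n\in J(R)\subseteq\Delta(R)$. Thus $a=e+n$ is an SDT representation in the required sense (commutativity and tripotency of $e$ are inherited from the strongly $2$-nil clean representation), so $R$ is SDT.

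There is essentially no obstacle here; the only subtlety is remembering why nilpotents land in $J(R)$ in a local ring, which is immediate. One can phrase this even more economically by saying that in a local ring $R$ every strongly $2$-nil clean decomposition is automatically an SDT decomposition because $\operatorname{Nil}(R)\subseteq\Delta(R)$. No appeal to Proposition \ref{local} or to the earlier technical lemmas is needed; the corollary is truly immediate from the set-theoretic containment.
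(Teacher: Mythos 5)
Your proof is correct and follows exactly the paper's argument: the paper likewise notes that in a local ring $\mathrm{Nil}(R)\subseteq J(R)$, so that $J(R)\subseteq\Delta(R)$ turns every strongly $2$-nil clean decomposition into an SDT decomposition. You simply spell out the (easy) reason why nilpotents lie in $J(R)$ when $R$ is local, which the paper leaves implicit.
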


\begin{proof} It is pretty easy, because in a local ring the containment $\text{Nil}(R) \subseteq J(R)$ always holds.
\end{proof}

The next assertion is of some importance by giving some close relevance between the notion of a semi-tripotent ring as stated in \cite{kyzr} and the new concept of an SDT ring given above.

\begin{proposition}\label{semitrip} Suppose $R$ is a semi-tripotent and local ring. Then, $R$ is an SDT ring.
\end{proposition}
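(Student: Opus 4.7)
The plan is to take any semi-tripotent representation $a = e + j$ with $e^3 = e$ and $j \in J(R)$, and to secure the commutation condition $e'd = de'$ required for an SDT representation by \emph{replacing} the tripotent $e$ with a central one, namely $0$, $1$, or $-1$. Because $J(R) \subseteq \Delta(R)$ and $\Delta(R)$ is closed under addition, any $J(R)$-correction arising from this substitution is harmlessly absorbed into the $\Delta$-summand, so the whole task reduces to showing that such a replacement is always available in a local ring.

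The structural observation that makes the substitution possible is that in a local ring every tripotent lies in $\{0\} \cup (1 + J(R)) \cup (-1 + J(R))$. Indeed, $e^2$ is an idempotent, and a local ring has only the trivial idempotents $0$ and $1$. If $e^2 = 0$ then $e = e \cdot e^2 = 0$. If instead $e^2 = 1$, then $(e-1)(e+1) = e^2 - 1 = 0$; since $R/J(R)$ is a division ring and hence has no nonzero zero divisors, at least one of $e - 1$ and $e + 1$ must lie in $J(R)$.

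From this, the proof splits into three cases. If $e = 0$, then $a = 0 + j$ is already an SDT representation, with the central tripotent $0$ commuting with $j \in J(R) \subseteq \Delta(R)$. If $k := e - 1 \in J(R)$, rewrite $a = 1 + (k + j)$, where the central tripotent $1$ commutes with everything and $k + j \in J(R) \subseteq \Delta(R)$. The case $e + 1 \in J(R)$ is symmetric and yields $a = -1 + (k + j)$. The only real obstacle the argument needs to confront is the commutation requirement $e'd = de'$, and this is exactly what is bypassed by passing from the possibly noncommuting $e$ to a central tripotent in $\{0, \pm 1\}$; once the structural observation of the second paragraph is in hand, the rest follows immediately.
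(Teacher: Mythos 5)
Your proof is correct, but it takes a more self-contained route than the paper. The paper's argument invokes the external structure theorem \cite[Theorem 3.5]{kyzr} for semi-tripotent rings to conclude that $R/J(R)$ is Boolean or Yaqub, and then uses locality to pin down $R/J(R)\cong\mathbb{Z}_2$ or $\mathbb{Z}_3$, from which $R=J(R)\cup(1+J(R))$ or $R=J(R)\cup(1+J(R))\cup(-1+J(R))$ and the SDT representations with central tripotents $0,\pm 1$ follow. You instead work directly from the definition: you classify the tripotents of a local ring as lying in $\{0\}\cup(1+J(R))\cup(-1+J(R))$ (via $e^2$ being a trivial idempotent and $R/J(R)$ having no zero divisors), and then absorb the $J(R)$-correction $e\mp 1$ into the $\Delta$-summand. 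Both proofs ultimately exploit the same fact --- that the tripotent can be replaced by a central one from $\{0,\pm1\}$ so that commutativity is free --- but yours avoids the citation entirely and is arguably more transparent, while the paper's version makes visible the connection to the Boolean/Yaqub dichotomy that reappears in Theorem~\ref{boolean and yaqub}. One stylistic remark: your identity $e=e\cdot e^2$ in the case $e^2=0$ is of course $e=e^3=e\cdot e^2=0$, which is fine, and the case $2\in J(R)$, where the cosets $1+J(R)$ and $-1+J(R)$ coincide, causes no harm to your trichotomy.
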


\begin{proof} Since $R$ is a local ring, either $2 \in J(R)$ or $2 \in U(R)$. If $2 \in J(R)$, then in virtue of \cite[Theorem 3.5]{kyzr} the factor-ring $R/J(R)$ is Boolean. On the other hand, as $R$ is local, it has to be that $R/J(R) \cong \mathbb{Z}_2$, and so $R = J(R) \cup (1 + J(R))$, yielding $R$ is an SDT ring. If, however, $2 \in U(R)$, then again \cite[Theorem 3.5]{kyzr} works to get that the quotient-ring $R/J(R)$ is a Yaqub ring. However, because $R$ is local, it must be that $R/J(R) \cong \mathbb{Z}_3$, and thus $R = J(R) \cup (1 + J(R)) \cup (-1 + J(R))$ implying $R$ is an SDT ring, as asserted.
\end{proof}

It is well known that a ring is Boolean if, and only if, it is a subdirect product of copies of \( \mathbb{Z}_2 \). Analogously, in \cite{css}, Chen and Sheibani called a non-zero ring \(R\) a {\it Yaqub ring} if it is a subdirect product of copies of \( \mathbb{Z}_3 \). They proved that \(R\) is a Yaqub ring if, and only if, 3 is nilpotent and \(R\) is a tripotent ring (that is, each its element is tripotent).

\medskip

We are now ready to attack the chief characterizing result, thereby completely describing the structure of the SDT rings.

\begin{theorem}\label{boolean and yaqub}
Assume \(R\) is an SDT ring. Then, \(R/J(R)\) is a tripotent ring, i.e., \(R/J(R) \cong R_1 \times R_2\), where \(R_1\) is a Boolean ring and \(R_2\) is a Yaqub ring.
\end{theorem}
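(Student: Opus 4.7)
My plan is to pass to the quotient $\bar R := R/J(R)$, which by Lemma \ref{lemma 0}(2) remains SDT and by Lemma \ref{lemma 4} is reduced, with $J(\bar R)=0$. Then I would exploit Lemma \ref{lemma 5} ($\bar 6 = 0$) to split $\bar R$ via a CRT--type idempotent decomposition, and treat the two resulting characteristic pieces separately using Propositions \ref{pro 3} and \ref{pro 4}.

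First I would verify that $\bar 3$ and $\bar 4 = -\bar 2$ form a pair of complementary orthogonal (automatically central) idempotents of $\bar R$: from $\bar 6 = 0$ a direct computation gives $\bar 3^{\,2}=\bar 9=\bar 3$, $\bar 4^{\,2}=\bar{16}=\bar 4$, $\bar 3\,\bar 4=\bar{12}=0$, and $\bar 3+\bar 4=\bar 7=\bar 1$. Setting $R_1 := \bar 3\,\bar R$ and $R_2 := \bar 4\,\bar R$ yields $\bar R\cong R_1\times R_2$, where $R_1$ has characteristic $2$ and $R_2$ has characteristic $3$. Each factor is SDT (Lemma \ref{lemma 0}(1)), reduced (as a direct summand), and has zero Jacobson radical (since $J$ distributes over direct products).

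For the characteristic $3$ factor $R_2$: $\bar 2$ is its own inverse, so $\bar 2\in U(R_2)$, and Proposition \ref{pro 3} forces $\Delta(R_2)=J(R_2)=0$. An SDT representation $a=e+d$ in $R_2$ then has $d=0$, so every element of $R_2$ is tripotent; being tripotent of characteristic $3$, $R_2$ is a Yaqub ring. For the characteristic $2$ factor $R_1$: $\bar 3=\bar 1\in U(R_1)$, so Proposition \ref{pro 4} yields that $R_1$ is an SDI ring. Since $R_1$ is reduced all its idempotents are central, and Corollary \ref{cor 2} then forces $R_1$ to be uniquely clean. By the Nicholson--Zhou characterization \cite[Theorem 20]{nzu}, every uniquely clean ring $S$ has $S/J(S)$ Boolean; applied to $R_1$ with $J(R_1)=0$, this gives that $R_1$ itself is Boolean.

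The main subtlety lies in handling the characteristic $2$ factor: unlike the $R_2$ case, Proposition \ref{pro 3} does not apply in characteristic $2$, so $\Delta(R_1)$ is not immediately zero, and a direct attempt to force the $\Delta$--summand in the SDT representation to vanish runs into a circularity (needing $d=0$ exactly when $a$ is already idempotent). Bridging this gap is what requires the detour through SDI $\Rightarrow$ uniquely clean $\Rightarrow$ Boolean via Nicholson--Zhou. Once both factors are settled, the conclusion $R/J(R)\cong R_1\times R_2$ with $R_1$ Boolean and $R_2$ Yaqub --- hence tripotent --- follows by reassembling the direct product.
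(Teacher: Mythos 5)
Your proposal is correct and follows essentially the same route as the paper: reduce to $\bar R=R/J(R)$, use $6\in J(R)$ to split via CRT (your explicit idempotents $\bar 3,\bar 4$ give exactly the paper's factors $\bar R/2\bar R$ and $\bar R/3\bar R$), then handle the characteristic $3$ piece with Proposition \ref{pro 3} and the characteristic $2$ piece via Proposition \ref{pro 4}, reducedness, Corollary \ref{cor 2}, and the Nicholson--Zhou result. The only cosmetic differences are that the paper invokes Lemma \ref{lemma 3} to get $a-a^3\in\Delta(R_2)=0$ where you use the SDT representation directly, which is equally valid.
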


\begin{proof}
Referring to Lemma \ref{lemma 5}, we have \(6 \in J(R)\). Set \(\bar{R} := R/J(R)\). Thanks to the famous Chinese Remainder Theorem, we write \(\bar{R} \cong R_1 \times R_2\), where \(R_1 := \bar{R}/2\bar{R}\) and \(R_2 := \bar{R}/3\bar{R}\). Since \(R\) is an SDT ring, Lemma \ref{lemma 0}(2) guarantees that \(\bar{R}\) is an SDT ring too. Therefore, again in view of Lemma \ref{lemma 0}(1), \(R_1\) is an SDT ring. Since \(2 = 0\) in \(R_1\), we have \(3 \in U(R_1)\). Thankfully, Proposition \ref{pro 4} yields \(R_1\) is an SDI ring. Also, Lemma \ref{lemma 4} implies that \(R_1\) is reduced, and thus all idempotents in \(R_1\) are central. Therefore, Corollary \ref{cor 2} shows that \(R_1\) is a uniquely clean ring. Note that, as \(J(R) = 0\), it must be that \(J(R_1) = 0\). Using now \cite[Theorem 19]{nzu}, we conclude that \(R_1\) is a Boolean ring, as formulated.

On the other hand, since \(3 = 0\) in \(R_2\not=\{0\}\), we have \(2 \in U(R_2)\). Knowing Proposition \ref{pro 3}, we obtain \(J(R_2) = \Delta(R_2)\). This means, with the help of Lemma \ref{lemma 3}, that, for any \(a \in R_2\), the relations \(a - a^3 \in \Delta(R_2) = J(R_2) = 0\) are true. Thus, for any \(a \in R_2\), we get that \(a = a^3\). Furthermore, using \cite[Lemma 4.4]{css}, we infer that \(R_2\) is a Yaqub ring, as given.
\end{proof}

It is worthwhile noticing that the extra requirement on the first direct component $R_1$ and the second direct component $R_2$ to be not simultaneously $\{0\}$ can be freely ignored here, as opposite to the showed in \cite{p}, where an analogous shortcoming was unambiguously detected for the main result of the paper \cite{ykz}.

\medskip

Let $R$ be a ring, and let $a \in R$. Suppose $ann_{l}a := \{ r \in R : ra = 0\}$ and $ann_{r}a := \{r \in  R: ar = 0\}$.

\medskip

We continue by verifying the following two needed technicalities.

\begin{lemma}\label{ann}
Let $R$ be a ring and $a = e + d$ an SDT representation in $R$. Then, $\text{ann}_l(a) \subseteq \text{ann}_l(e)$ and $\text{ann}_r(a) \subseteq \text{ann}_r(e)$.
\end{lemma}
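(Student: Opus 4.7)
The plan is to start with $r\in\text{ann}_l(a)$, so $r(e+d)=0$, i.e.\ $re=-rd$, and then leverage the two structural facts we have about the SDT representation: $ed=de$ and $e^3=e$, together with the unit-absorbing property of $\Delta(R)$.

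First I would iterate multiplication by $e$ on the right of $re=-rd$, swapping $de$ for $ed$ at each step so $r$ stays on the left and $d$'s accumulate:
\begin{align*}
re^2 &= (re)e=-(rd)e=-r(de)=-r(ed)=-(re)d=rd^2,\\
re^3 &= (re^2)e=rd^2 e=rd(de)=rd(ed)=(rde)d=-(re^2)d=-rd^3.
\end{align*}
Since $e^3=e$, comparing the last identity with $re=-rd$ gives $rd=rd^3$, that is,
\[
rd(1-d^2)=0.
\]

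Next I would observe that $1-d^2\in U(R)$. Indeed, since $d\in\Delta(R)$ and $\Delta(R)$ is a subring, $\pm d\in\Delta(R)$, hence $1\pm d\in U(R)$ by the very definition of $\Delta(R)$, and so $1-d^2=(1-d)(1+d)$ is a unit. Right-multiplying $rd(1-d^2)=0$ by its inverse yields $rd=0$, whence $re=-rd=0$, proving $r\in\text{ann}_l(e)$.

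The argument for $\text{ann}_r(a)\subseteq\text{ann}_r(e)$ is the mirror image: from $ar=0$ derive $er=-dr$, then multiply by $e$ on the \emph{left} and use $ed=de$ to get $e^2r=d^2r$ and $e^3r=-d^3r$; using $e^3=e$ one obtains $(1-d^2)dr=0$, and since $1-d^2$ is a unit, $dr=0$ and $er=0$ follow. I do not anticipate a serious obstacle here; the only delicate point is remembering that we can only slide $d$ and $e$ past each other (not past $r$), which dictates whether to multiply on the left or on the right at each step.
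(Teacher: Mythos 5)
Your proof is correct. It reaches the same destination as the paper's but by a somewhat different, and in fact more self-contained, route. The paper first invokes its Lemma~\ref{lemma 1} to write $a^2 = e^2 + d'$ with $d' = 2ed + d^2 \in \Delta(R)$, then uses $ra^2 = 0$ and a single right-multiplication by $e$ (together with $e^3 = e$ and $ed = de$) to arrive at $re(1+d') = 0$, concluding via $1 + d' \in U(R)$. You instead avoid Lemma~\ref{lemma 1} entirely: iterating $re = -rd$ through $re^2 = rd^2$ and $re^3 = -rd^3$ and using $e^3 = e$ gives $rd(1-d^2) = 0$, where $1 - d^2 \in U(R)$ for the elementary reason that $-d^2 \in \Delta(R)$ and $1 + \Delta(R) \subseteq U(R)$ (or via your factorization $(1-d)(1+d)$). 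Both arguments rest on the same two pillars --- $e^3 = e$ plus the fact that $1$ plus an element of $\Delta(R)$ is a unit --- but yours trades one application of the auxiliary lemma for two extra multiplications by $e$, which makes it marginally longer but independent of the surrounding machinery; your handling of the left/right symmetry is also correct.
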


\begin{proof}
Assume $ra = 0$. Now, Lemma \ref{lemma 1} applies to ensure that there exists $d' \in \Delta(R)$ such that $a^2 = e^2 + d'$. Since $ra = 0$, we have $re^2 + rd' = 0$. Now, multiplying by $e$ from the right, we get $re + red' = 0$, and so $re(1 + d') = 0$. Since $d' \in \Delta(R)$, it follows that $1 + d' \in U(R)$ which forces $re = 0$. Thus, $r \in \text{ann}_l(e)$. Similarly, it can be shown that the inclusion $\text{ann}_r(a) \subseteq \text{ann}_r(e)$ is too valid, as required.
\end{proof}

\begin{lemma}
Let $R$ be a ring and $e \in R$ an idempotent. If $a \in eRe$ is an SDT element in $R$, then $a$ is an SDT element in the corresponding corner subring $eRe$.
\end{lemma}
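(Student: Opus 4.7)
The natural plan is to take an SDT representation $a = f + d$ of $a$ inside $R$ and show that exactly the same pair $(f,d)$ serves as an SDT representation inside the corner $eRe$. The tripotent identity $f^{3}=f$ and the commutativity $fd=df$ are equations that do not reference the ambient unit, so both transfer for free once one knows that $f$ and $d$ both lie in $eRe$. Hence the substance of the proof reduces to two claims: $f \in eRe$, and $d \in \Delta(eRe)$ (the latter being the subtle point, since it is not automatic from $d \in \Delta(R) \cap eRe$).

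The first claim is the easy half. Because $a \in eRe$ we have $(1-e)a = 0 = a(1-e)$, i.e.\ $1-e \in \text{ann}_{l}(a) \cap \text{ann}_{r}(a)$. Applying Lemma~\ref{ann} via the containments $\text{ann}_{l}(a) \subseteq \text{ann}_{l}(f)$ and $\text{ann}_{r}(a) \subseteq \text{ann}_{r}(f)$ immediately yields $(1-e)f = 0 = f(1-e)$, so $f = efe \in eRe$, and consequently $d = a - f$ lives in $eRe$ as well.

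The main obstacle is proving $d \in \Delta(eRe)$: one must test against every unit $u \in U(eRe)$, but such a $u$ is a priori not a unit of $R$ (its inverse inside $eRe$ satisfies $u u^{-1} = e$, not $1$). The idea is to lift $u$ to $\tilde u := u + (1-e) \in R$, which, using that $u \in eRe$ and that $e$ is an idempotent, is readily checked to be a unit of $R$ with inverse $u^{-1} + (1-e)$. Applying the hypothesis $d \in \Delta(R)$ to $\tilde u$, and using $d(1-e) = 0$, then gives $1 - du \in U(R)$.

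The final descent step, from invertibility in $R$ to invertibility in $eRe$, is the part I expect to be the most delicate, since $e$ is not assumed central and therefore $R$ does not split as a direct product of $eRe$ and $(1-e)R(1-e)$. The plan is to exploit the Peirce-type decomposition $1 - du = (e - du) + (1-e)$, whose two summands multiply to zero in either order thanks to $du \in eRe$. From this I would verify directly that, writing $v := (1-du)^{-1}$ inside $R$, the element $eve \in eRe$ is a two-sided inverse of $e - du$ in the corner. This establishes $e - du \in U(eRe)$ for every $u \in U(eRe)$, whence $d \in \Delta(eRe)$, completing the SDT representation of $a$ inside $eRe$.
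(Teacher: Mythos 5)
Your proposal is correct and follows essentially the same route as the paper: use Lemma~\ref{ann} to place the tripotent (and hence $d$) inside $eRe$, then prove $eRe\cap\Delta(R)\subseteq\Delta(eRe)$ by lifting a corner unit $u$ to $u+(1-e)\in U(R)$ and descending the inverse of $1-du$ back to the corner by multiplying by $e$ on both sides. The only (harmless) cosmetic differences are that the paper tests $\Delta$ with $1-ur$ rather than $1-du$ and records only a one-sided inverse in the corner, whereas you verify both sides.
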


\begin{proof}
Write $a = f + d$, where $f = f^3$, $d \in \Delta(R)$ and $fd = df$. Since $1 - e \in \text{ann}_l(a) \cap \text{ann}_r(a)$, Lemma \ref{ann} is a guarantor that $1 - e \in \text{ann}_l(f) \cap \text{ann}_r(f)$ implying $(1 - e)f = f(1 - e) = 0$. Thus, $f = ef = fe$. Likewise, since $a \in eRe$, we receive $a = ea = ae = eae$. But, subsequently multiplying $a = f + d$ by $e$ from the left and right, we obtain that $a = efe + ede$. Note that, since $f = ef = fe$ and $f$ is a tripotent, $efe$ is also a tripotent. So, it suffices to show that $ede \in \Delta(eRe)$.

On the other hand, since $f = ef = fe = efe$ and $a = ea = ae = eae$, it is evident that $$d = ed = de = ede \in \Delta(R) \cap eRe.$$ Now, we show that $eRe \cap \Delta(R) \subseteq \Delta(eRe)$ always holds. To this purpose, assume $r \in eRe \cap \Delta(R)$ and $u \in U(eRe)$. Then, $(u + (1 - e))(u^{-1} + (1 - e)) = 1$, so $u + (1 - e) \in U(R)$. Since $r \in \Delta(R)$, there exists $v \in R$ such that $(1 - (u + (1 - e))r)v = 1$. But $r \in eRe$, so that $(1 - ur)v = 1$. Furthermore, multiplying subsequently by $e$ from the left and right, we extract that $(e - ur)eve = e$ forcing $r \in \Delta(eRe)$. Finally, $d \in \Delta(eRe)$, and we are done.
\end{proof}

As an automatic consequence, we yield the following.

\begin{corollary}\label{corner ring}
Let $R$ be a ring, and let $e \in R$ be an idempotent. If $R$ is SDT ring, then so is the corresponding corner subring $eRe$.
\end{corollary}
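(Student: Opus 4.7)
The plan is to prove the corollary as a direct application of the preceding lemma, with essentially no additional work required. The preceding lemma supplies exactly the pointwise passage needed: whenever an element of $eRe$ admits an SDT representation in the ambient ring $R$, the same element also admits an SDT representation inside the corner subring $eRe$ (viewed as a unital ring with identity $e$).

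So I would argue as follows. Let $a \in eRe$ be arbitrary. Since $R$ is assumed SDT, $a$ is an SDT element of $R$. Because $a$ lies in $eRe$, the previous lemma immediately produces a decomposition $a = f + d$ with $f = f^3 \in eRe$, $d \in \Delta(eRe)$, and $fd = df$. Since $a$ was arbitrary, every element of $eRe$ admits an SDT representation in $eRe$, so $eRe$ is an SDT ring by definition.

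There is no real obstacle here: the technical content, namely the containment $eRe \cap \Delta(R) \subseteq \Delta(eRe)$ and the fact that tripotents in the SDT representation actually live in $eRe$ (which uses Lemma \ref{ann} to kill the $(1-e)$-parts), is already encapsulated in the preceding lemma. The corollary is therefore just the global version of that local (element-wise) statement, and it suffices to remark on this reduction without further computation.
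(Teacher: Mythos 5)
Your proposal is correct and is exactly the argument the paper intends: the corollary is stated as an automatic consequence of the preceding lemma, applied elementwise to each $a \in eRe$ (which is an SDT element of $R$ since $R$ is SDT). Nothing further is needed.
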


\section{Triangular Matrix Rings}

As usual, a ring $R$ is termed {\it local}, provided $R/J(R)$ is a division ring, that is, each element in $R\setminus J(R)$ is a unit, which set-theoretically means that $R=J(R)\cup U(R)$.

\medskip

We begin here with the following technicality.

\begin{lemma}\label{trivial tripotent}
Let $R$ be a local ring with $2 \in U(R)$. Then, $R$ has only trivial tripotent elements.
\end{lemma}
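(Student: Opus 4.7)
The plan is to reduce modulo $J(R)$ and exploit that $R/J(R)$ is a division ring (hence has no zero divisors), then lift each of the three possible images of a tripotent back to $R$ using that $2 \in U(R)$.

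Let $e \in R$ satisfy $e^3 = e$. Then $e(e-1)(e+1)=0$ in $R$, so $\bar e(\bar e-1)(\bar e+1) = 0$ in the division ring $\bar R := R/J(R)$. Since $\bar R$ has no zero divisors, this forces $\bar e \in \{\bar 0, \bar 1, \overline{-1}\}$. Note that these three classes are genuinely available (though distinctness is not logically required for the argument) because $2 \in U(R)$ prevents $\bar 1 = \overline{-1}$.

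Next I would lift each case. If $\bar e = \bar 0$, then $e \in J(R)$, so $1 - e^2 \in 1 + J(R) \subseteq U(R)$; multiplying $e(1-e^2)=0$ by the inverse of $1-e^2$ gives $e = 0$. If $\bar e = \bar 1$, then $1 - e \in J(R)$, hence $e \in 1 + J(R) \subseteq U(R)$, which combined with $e(e^2-1)=0$ yields $e^2 = 1$, i.e.\ $(e-1)(e+1)=0$. Here $e+1 = 2 + (e-1) \in 2 + J(R) \subseteq U(R)$ because $2 \in U(R)$, so $e+1$ is invertible and $e - 1 = 0$. The case $\bar e = \overline{-1}$ is symmetric: $e + 1 \in J(R)$, giving $e \in U(R)$ and $e^2 = 1$; now $e - 1 = -2 + (e+1) \in -2 + J(R) \subseteq U(R)$, so $(e-1)(e+1)=0$ forces $e + 1 = 0$, i.e.\ $e = -1$.

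There is no serious obstacle here; the only subtlety to watch is the invocation of $2 \in U(R)$ in the lifting step, which is precisely what guarantees that $e \pm 1$ (the factor one wishes to cancel) lies in $U(R)$ rather than being another potential zero divisor. Locality is used only via the standard fact that $R \setminus J(R) = U(R)$, which makes $R/J(R)$ a division ring and lets us turn the factored identity $e(e-1)(e+1)=0$ into a trichotomy.
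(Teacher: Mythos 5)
Your proof is correct and follows essentially the same route as the paper: locality splits the tripotent into the cases $e\in J(R)$ versus $e\in U(R)$ (you phrase this via the trichotomy for $\bar e$ in the division ring $R/J(R)$), and $2\in U(R)$ is used in exactly the same way to guarantee that the factor $e\pm 1$ to be cancelled is a unit. The only cosmetic difference is that the paper argues directly that $(1-e)+(1+e)=2\in U(R)$ forces one of the two factors to be a unit, while you first pin down $\bar e$ in the quotient and then observe $e\pm 1\in \pm 2+J(R)\subseteq U(R)$; both are valid.
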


\begin{proof}
Suppose that $e = e^3 \in R$. If $e \in J(R)$, then $e(1 - e^2) = 0$, whence $e = 0$. If now $e \in U(R)$, then $e^2 = 1$, and so $(1 - e)(1 + e) = 0$. Since $(1 - e) + (1 + e) = 2 \in U(R)$ and $R$ is a local ring, we have either $1 - e \in U(R)$ or $1 + e \in U(R)$. This, in turn, means that either $e = 1$ or $e = -1$, as required.
\end{proof}

Based on the above claim, we now extend the well-known Workhorse Lemma (see \cite[Lemma 6]{bdd}) as follows.

\begin{lemma}\label{lemma 2.19}(Generalized Workhorse Lemma) Let $ R $ be a local ring such that $2 \in U(R) $, $n \geq 2$ and $A,E \in {\rm T}_n(R)$. Suppose that, for all $(i, j) \neq (1, n)$, $(E^3)_{ij} = E_{ij}$ and $(AE - EA)_{ij} = 0$. Suppose also that
\[
A = \begin{pmatrix}
a & \alpha & c \\
 & B & \beta \\
 & & b
\end{pmatrix}
\text{ and }
E = \begin{pmatrix}
e & \gamma & z \\
 & F & \delta \\
 & & f
\end{pmatrix}
\]
where $B,F \in {\rm T}_{n-2}(R)$, $a, b, c, e, f, z \in R$, $\alpha, \gamma \in {\rm M}_{1,n-2}(R)$ and $\beta, \delta \in {\rm M}_{n-2,1}(R)$. Then, the following items are fulfilled:

\noindent(i) Given $e = f = 1$, then $E^3 = E$ if, and only if, $z=-1/2(\gamma F\delta + 2\gamma\delta)$, and in this case, $AE = EA$.

\noindent(ii) Given $e = f = -1$, then $E^3 = E$ if, and only if, $z=-1/2(\gamma F\delta - 2\gamma\delta)$, and in this case, $AE = EA$.

\noindent(iii) Given $e = f = 0$, then $E^3 = E$ if, and only if, $z=\gamma F\delta$, and in this case, $AE = EA$.

\noindent(iv) Given $e = 1$ and $f = -1$, then $E^3 = E$. Further, $AE = EA$ if, and only if, $z$ satisfies the equation
$
az - zb = \gamma \beta - \alpha \delta + 2c.
$

\noindent(v) If $e = -1$ and $f = 1$, then $E^3 = E$. Further, $AE = EA$ if, and only if, $z$ satisfies the equation
$
az - zb = \gamma \beta - \alpha \delta - 2c.
$

\noindent(vi) If $e = 1$ and $f = 0$, then $E^3 = E$. Further, $AE = EA$ if, and only if, $z$ satisfies the equation
$
az - zb = \gamma \beta - \alpha \delta + c.
$

\noindent(vii) If $e = 0$ and $f = 1$, then $E^3 = E$. Further, $AE = EA$ if, and only if, $z$ satisfies the equation
$
az - zb = \gamma \beta - \alpha \delta - c.
$

\noindent(viii) If $e = -1$ and $f = 0$, then $E^3 = E$. Further, $AE = EA$ if, and only if, $z$ satisfies the equation
$
az - zb = \gamma \beta - \alpha \delta - c.
$

\noindent(w) If $e = 0$ and $f = -1$, then $E^3 = E$. Further, $AE = EA$ if, and only if, $z$ satisfies the equation
$
az - zb = \gamma \beta - \alpha \delta + c.
$
\end{lemma}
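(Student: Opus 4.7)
The plan is to observe first that, under the standing hypotheses $(E^{3}-E)_{ij}=0$ and $(AE-EA)_{ij}=0$ for every $(i,j)\neq(1,n)$, the entire statement reduces to analysing the two scalar quantities at position $(1,n)$. A direct block multiplication of the displayed decompositions yields
\begin{align*}
(E^{3}-E)_{1,n} &= e^{2}z+ezf+zf^{2}+e\gamma\delta+\gamma\delta f+\gamma F\delta-z,\\
(AE-EA)_{1,n} &= az-zb+c(f-e)+\alpha\delta-\gamma\beta.
\end{align*}
The remaining off-$(1,n)$ hypotheses, read in block form, supply the auxiliary identities used throughout: $e^{3}=e$, $f^{3}=f$, $F^{3}=F$, $e^{2}\gamma+e\gamma F+\gamma F^{2}=\gamma$, $F^{2}\delta+F\delta f+\delta f^{2}=\delta$, together with the commutation relations $BF=FB$, $a\gamma+\alpha F=e\alpha+\gamma B$ and $B\delta+\beta f=F\beta+\delta b$. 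Since $R$ is local with $2\in U(R)$, Lemma~\ref{trivial tripotent} forces $e,f\in\{-1,0,1\}$, so $(e,f)$ ranges over precisely the nine pairs listed.

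For the three diagonal cases $e=f$, the coefficient of $z$ in $(E^{3}-E)_{1,n}$ is $3e^{2}-1\in\{2,-1\}$, a unit of $R$; solving the equation $(E^{3}-E)_{1,n}=0$ produces the three displayed closed-form expressions for $z$. To verify the automatic commutation $AE=EA$, I would substitute this $z$ into $(AE-EA)_{1,n}$ and reduce using the auxiliary identities. The decisive manoeuvre, illustrated for case (i), is to rewrite $a\gamma=\alpha(1-F)+\gamma B$ and $\delta b=B\delta+(1-F)\beta$, commute $B$ past $F$ via $BF=FB$, and then kill the mixed $\gamma F\beta$ and $\alpha F\delta$ terms by invoking $\gamma(F+F^{2})=0$ and $(F+F^{2})\delta=0$; the residue collapses to $\gamma\beta-\alpha\delta$, which cancels $\alpha\delta-\gamma\beta$ in the $(AE-EA)_{1,n}$ formula. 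Cases (ii) and (iii) follow the same pattern with the appropriate sign changes.

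For the six off-diagonal cases $e\neq f$, the $z$-coefficient $e^{2}+ef+f^{2}-1$ is identically zero and the $\gamma\delta$-coefficient $e+f$ takes values in $\{0,\pm 1\}$, so $(E^{3}-E)_{1,n}$ collapses to an expression of the shape $\gamma F\delta$, $\gamma(1+F)\delta$ or $\gamma(F-1)\delta$ with no $z$ appearing. The claim $E^{3}=E$ therefore reduces to showing that this single expression vanishes. In case (iv), for instance, the identities $\gamma F=-\gamma F^{2}$ and $F^{2}\delta=F\delta$ give $\gamma F\delta=-\gamma F^{2}\delta=-\gamma F\delta$, so $2\gamma F\delta=0$ and $\gamma F\delta=0$ since $2\in U(R)$; case (v) is analogous, while in cases (vi), (vii), (viii) and (w) the required expression cancels outright without needing to invert $2$ again. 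Once $E^{3}=E$ is in hand, the formula $(AE-EA)_{1,n}=az-zb+c(f-e)+\alpha\delta-\gamma\beta$ together with $c(f-e)\in\{\pm c,\pm 2c\}$ reads off the claimed characterising equations verbatim.

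The main obstacle I expect is the algebraic bookkeeping in the diagonal step: after substituting the explicit $z$ into $az-zb$, one obtains a long noncommutative expression involving products such as $\alpha F\delta$, $\alpha F^{2}\delta$, $\gamma B\delta$, $\gamma FB\delta$, $\gamma F\beta$ and $\gamma F^{2}\beta$, and the reduction to $\gamma\beta-\alpha\delta$ has to be carried out with care for the side on which each polynomial identity in $F$ acts, and for the signs, which flip between cases (i) and (ii).
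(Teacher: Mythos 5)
Your proposal is correct and follows essentially the same route as the paper: extract the block identities ($e^3=e$, $\gamma F+e\gamma F$-type relations, $BF=FB$, $a\gamma+\alpha F=e\alpha+\gamma B$, $B\delta+\beta f=F\beta+\delta b$) from the off-$(1,n)$ hypotheses, then analyse the single $(1,n)$ entries of $E^3-E$ and $AE-EA$, solving for $z$ in the diagonal cases and showing $\gamma F\delta$ (resp.\ $\gamma(1\pm F)\delta$) vanishes in the mixed cases via $2\in U(R)$. The only cosmetic difference is that you treat $(AE-EA)_{1,n}$ as one expression where the paper computes $(AE)_{1n}$ and $(EA)_{1n}$ separately, and your case (i) reduction (using $\gamma(F+F^2)=0$, $(F+F^2)\delta=0$ and $BF=FB$) checks out.
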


\begin{proof}
(i) It is apparent that $E^3 = E$ if, and only if, $z = -1/2(\gamma F\delta + 2\gamma\delta)$. We show that $AE = EA$. Given the assumptions, we have
\begin{align}
& z = -1/2(\gamma F\delta + 2\gamma\delta) \label{a1 1} \\
& \gamma F = -\gamma F^2 \label{a1 2}\\
& F\delta = -F^2\delta \label{a1 3}\\
& \alpha + \gamma B = a\gamma + \alpha F \label{a1 4}\\
& F \beta + \delta b = B \delta + \beta \label{a1 5}
\end{align}

In virtue of the above equations, we compute that
\begin{align*}
(EA)_{1n} &= c + \gamma \beta + zb \overset{(\ref{a1 1})}{=} c + \gamma \beta -1/2 \gamma F\delta b - \gamma \delta b, \\
&\overset{(\ref{a1 5})}{=} c + \gamma \beta + \gamma(F \beta - B \delta - \beta) + 1/2\gamma F(F \beta - B \delta - \beta), \\
&= c + \gamma \beta + \gamma F \beta - \gamma B \delta - \gamma \beta + 1/2\gamma F^2 \beta - 1/2 \gamma F B \delta - 1/2 \gamma F \beta, \\
&\overset{(\ref{a1 2})}{=} c - \gamma B \delta - 1/2 \gamma F B \delta.
\end{align*}

\begin{align*}
(AE)_{1n} &= az + \alpha \delta + c \overset{(\ref{a1 1})}{=} -a\gamma \delta - 1/2 a \gamma F \delta + \alpha \delta + c, \\
&\overset{(\ref{a1 4})}{=} (\alpha F - \alpha - \gamma B)\delta + 1/2 (\alpha F - \alpha - \gamma B)F \delta + \alpha \delta + c, \\
&= \alpha F \delta - \alpha \delta - \gamma B \delta + 1/2 \alpha F^2 \delta - 1/2 \alpha F \delta - 1/2 \gamma B F \delta + \alpha \delta + c, \\
&\overset{(\ref{a1 3})}{=} c - \gamma B \delta - 1/2 \gamma B F \delta = c - \gamma B \delta - 1/2 \gamma F B \delta.
\end{align*}

Note that, since $ (AE - EA)_{ij} = 0 $, we establish $ FB = BF $.

\medskip

(ii) The proof is similar to part (i).

(iii) It is obvious that $ E^3 = E $ if, and only if, $ z = \gamma F \delta $. We show that $ AE = EA $. Given the assumptions, we have
\begin{align}
&z = \gamma F \delta, \label{a2 1}\\
&\gamma = \gamma F^2,\label{a2 2}\\
&\delta = F^2 \delta,\label{a2 3}\\
&\gamma B = a \gamma + \alpha F,\label{a2 4}\\
&B \delta = F \beta + \delta b.\label{a2 5}
\end{align}

From the above equations, we calculate that

\begin{align*}
(EA)_{1n} &= \gamma \beta + zb \overset{(\ref{a2 1})}{=} \gamma \beta + \gamma F \delta b, \\
&\overset{(\ref{a2 5})}{=} \gamma \beta + \gamma F (B \delta - F \beta), \\
&= \gamma \beta + \gamma F B \delta - \gamma F^2 \beta, \\
&\overset{(\ref{a2 2})}{=} \gamma F B \delta.
\end{align*}
\begin{align*}
(AE)_{1n} &= az + \alpha \delta \overset{(\ref{a2 1})}{=} a \gamma F \delta + \alpha \delta, \\
&\overset{(\ref{a2 4})}{=} (\gamma \beta - \alpha F) F \delta + \alpha \delta,\\
&= \gamma B F \delta - \alpha F^2 \delta + \alpha \delta,\\
&\overset{(\ref{a2 3})}{=} \gamma B F \delta = \gamma F B \delta.
\end{align*}

(iv) Assume $ e = 1 $ and $ f = -1 $. Then, under the assumptions, we deduce that

$$\gamma F = -\gamma F^2, \quad F^2 \delta = F \delta \Longrightarrow \gamma F \delta = -\gamma F^2 \delta = -\gamma F \delta \Longrightarrow 2 \gamma F \delta = 0.$$

But, since $ 2 \in U(R) $, we have $ \gamma F \delta = 0 $. Thus, we get $ (E^3)_{1n} = \gamma F \delta + z = z = E_{1n} $ and, therefore, $ E^3 = E $. Moreover, it is clear that $ EA = AE $ if, and only if,

$$az + \alpha \delta - c = c + \gamma \beta + zb,$$

\noindent which is equivalent to

$$az - zb = \gamma \beta - \alpha \delta + 2c.$$

\medskip

(v) The proof is similar to part (iv).

(vi) Assume $ e = 1 $ and $ f = 0 $. So, under the given assumptions, we have

$$\gamma F = -\gamma F^2, \quad \delta = F^2 \delta \Longrightarrow \gamma F \delta = -\gamma F^2 \delta = -\gamma \delta.$$

Consequently, we derive $ (E^3)_{1n} = z + \gamma \delta + \gamma F \delta = z = E_{1n} $, and hence $ E^3 = E $. It is also readily checked that $ EA = AE $ if, and only if,

$$az - zb = \gamma \beta - \alpha \delta + c.$$

Finally, one sees that points (vii), (viii) and (w) possess proofs which are similar to that of (vi).
\end{proof}

The next preliminary facts are worthy of discussion: let $ a \in R $. The mappings $l_a: R \to R$ and $r_a: R \to R$ represent the (additive) abelian group endomorphisms defined respectively by $l_a(r) = ar$ and $r_a(r) = ra$ for all $r \in R$. Consequently, the expression $ l_a - r_b $ defines an abelian group endomorphism such that $(l_a - r_b)(r) = ar - rb$ for any $r \in R$. According to \cite{diesl}, a local ring $ R $ is classified as {\it bleached} if, for any $ a \in U(R) $ and $ b \in J(R) $, both $ l_a - r_b $ and $ l_b - r_a $ are surjective. The category of bleached local rings includes many well-established examples, such as commutative local rings, local rings with nil Jacobson radicals, and local rings in which some power of each element of their Jacobson radicals is central (see \cite[Example 13]{bdd}).

\medskip

Now, we need the following.

\begin{lemma} \label{there ex tri}
Let $R$ be a local ring such that $2 \in U(R)$, and suppose that $A \in {\rm T}_n(R)$. Write $A$ as $(a_{ij})$. Then, for any set $\{e_{ii}\}_{i=1}^n$ of tripotents in $R$ such that $e_{ii} = e_{jj}$ whenever $l_{a_{ii}} - r_{a_{jj}}$ is not a surjective abelian group endomorphism of $R$, there exists a tripotent $E \in {\rm T}_n(R)$ such that $AE = EA$ and $E_{ii} = e_{ii}$ for every $i \in \{1,\dots,n\}$.
\end{lemma}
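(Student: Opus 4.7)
The plan is to proceed by induction on $n$. The base case $n = 1$ is immediate, taking $E = (e_{11})$. For the inductive step with $n \geq 2$, first apply the induction hypothesis to the principal upper-left $(n-1) \times (n-1)$ submatrix $A'$ of $A$ together with the tripotents $e_{11},\ldots,e_{n-1,n-1}$; the required surjectivity condition for $A'$ is inherited (as a restriction) from the one on $A$, so this produces a tripotent $E' \in {\rm T}_{n-1}(R)$ commuting with $A'$ and having the specified diagonal. Now write $E = \begin{pmatrix} E' & w \\ 0 & e_{nn} \end{pmatrix}$ and determine the column $w = (w_1,\ldots,w_{n-1})^T$ (with $w_k = E_{k,n}$) iteratively from $k = n-1$ down to $k = 1$.

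At step $k$, I would consider the principal submatrices $\tilde{A}, \tilde{E}$ on rows and columns $\{k, k+1, \ldots, n\}$, of size $m := n - k + 1$. Because both $A$ and $E$ are upper-triangular, matrix products restrict cleanly to principal sub-blocks anchored at a diagonal entry, so that $(\tilde{E}^3)_{ij} = (E^3)_{k+i-1, k+j-1}$ and likewise for $\tilde{A}\tilde{E} - \tilde{E}\tilde{A}$. The only undetermined entry of $\tilde{E}$ is its top-right corner $w_k$; the remaining entries come either from $E'$ (in the upper-left $(m-1) \times (m-1)$ sub-block) or from $w_{k+1}, \ldots, w_{n-1}$ that were set in earlier steps.

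The hypotheses of the Generalized Workhorse Lemma (Lemma \ref{lemma 2.19}) are then satisfied on $\tilde{A}, \tilde{E}$: in the upper-left portion the tripotent and commutation relations hold because $E'$ is tripotent commuting with $A'$; for entries at $(k', n)$ with $k' > k$, they hold because Lemma \ref{lemma 2.19} was applied at the earlier step $k'$, which guaranteed both relations on the full sub-block on rows and columns $\{k', \ldots, n\}$, and in particular at position $(k', n)$. Applying Lemma \ref{lemma 2.19}: if $e_{kk} = e_{nn}$, then one of cases (i)--(iii) applies — the tripotent equation uniquely determines $w_k$ and commutation is automatic, with no surjectivity hypothesis required. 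If $e_{kk} \neq e_{nn}$, then one of cases (iv)--(w) applies — the tripotent equation imposes no constraint and the commutation equation has the form $a_{kk} w_k - w_k a_{nn} = \eta$ for a prescribed $\eta \in R$, which is solvable because the contrapositive of the main hypothesis forces $l_{a_{kk}} - r_{a_{nn}}$ to be surjective whenever $e_{kk} \neq e_{nn}$. Hence a suitable $w_k$ exists at each step, and the final $E$ is the desired tripotent with prescribed diagonal commuting with $A$.

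The main obstacle I expect is the bookkeeping required to verify, at each step $k$, that the previously-determined entries of $\tilde{E}$ genuinely satisfy the tripotent and commutation conditions needed to invoke Lemma \ref{lemma 2.19}. This rests crucially on the upper-triangular structure, which ensures that these conditions at a position $(i,j)$ inside a sub-block depend only on entries of that same sub-block, together with careful tracking that each earlier invocation of Lemma \ref{lemma 2.19} at step $k'$ actually established the relations at position $(k', n)$ that are then reused when we descend to the larger sub-block at step $k$.
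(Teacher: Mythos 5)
Your proposal is correct and follows essentially the same route as the paper, which simply defers to the induction-plus-workhorse argument of \cite[Lemma 7]{bdd} adapted to Lemma~\ref{lemma 2.19}: build $E$ by induction on $n$ and fill the last column from bottom to top, using the nine cases of the Generalized Workhorse Lemma and invoking surjectivity of $l_{a_{kk}}-r_{a_{nn}}$ exactly when $e_{kk}\neq e_{nn}$. The only detail worth making explicit is that, by Lemma~\ref{trivial tripotent}, each $e_{ii}$ is $0$ or $\pm 1$ and hence central, which is what makes the diagonal conditions $(AE-EA)_{ii}=0$ (in particular at the $(n,n)$ position, not covered by any earlier application of Lemma~\ref{lemma 2.19}) automatic and guarantees that the case analysis of Lemma~\ref{lemma 2.19} is exhaustive.
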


\begin{proof}
Leveraging Lemma \ref{lemma 2.19}, the proof process mirrors that of \cite[Lemma 7]{bdd}. To avoid redundancy, we omit the detailed proof.
\end{proof}

We are now in a position to attack the main result in this section, in which proof we shall apply the established above Theorem~\ref{boolean and yaqub}.

\begin{theorem}\label{local}
Let $R$ be a local ring and $n > 2$. Then, the following conditions are equivalent:

(1) ${\rm T}_n(R)$ is an SDT ring.

(2) either

(2.1) \( R \) is a bleached ring and \( R/J(R) \cong \mathbb{Z}_2 \),

\noindent or

(2.2) \( R \) is a bleached ring, \( R/J(R) \cong \mathbb{Z}_3 \) and, if \( a, b \in R \) such that \( a - 1 \in \Delta(R) \) and \( b + 1 \in \Delta(R) \), then \( l_a - r_b : R \to R \) is surjective.
\end{theorem}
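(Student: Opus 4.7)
My plan is to adapt the Workhorse Lemma strategy from the strongly nil-clean case, bifurcating along the dichotomy forced by Theorem~\ref{boolean and yaqub}. Applied to the diagonal matrix units, Corollary~\ref{corner ring} yields that the corner $R \cong e_{11}{\rm T}_n(R)e_{11}$ is SDT, and since $R$ is local with division-ring quotient $R/J(R)$, Theorem~\ref{boolean and yaqub} forces $R/J(R) \cong \mathbb{Z}_2$ (Case~2.1) or $R/J(R) \cong \mathbb{Z}_3$ (Case~2.2). I will use throughout the standard observation that, for $R$ local, a triangular matrix is a unit iff its diagonal entries are units of $R$, from which it follows that $\Delta({\rm T}_n(R)) = J({\rm T}_n(R))$.

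For $(1)\Rightarrow(2)$, I fix $u \in U(R)$, $j \in J(R)$ and $c \in R$ and consider the matrix $A \in {\rm T}_n(R)$ with $a_{11} = u$, $a_{nn} = j$, $a_{1n} = c$ and zeros elsewhere. An SDT representation $A = E + D$ has $D \in J({\rm T}_n(R))$, so the diagonal entries of $E$ are tripotents congruent to $a_{ii}$ modulo $J(R)$; in particular $E_{11}$ is a tripotent unit and $E_{nn} = 0$ (the unique tripotent in $J(R)$). In Case~2.2, Lemma~\ref{trivial tripotent} forces $E_{11} = 1$, and reading the $(1,n)$-entry of $EA = AE$ gives $u\,E_{1n} - E_{1n}\,j = c$, yielding surjectivity of $l_u - r_j$; by symmetry $l_j - r_u$ is surjective too. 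The extra condition in (2.2) is obtained by replacing the corners by $a \in 1+J(R)$, $b \in -1+J(R)$, so that $E_{11} = 1$, $E_{nn} = -1$, and the $(1,n)$-equation becomes $a z - z b = 2c$, solvable for any right-hand side since $2 \in U(R)$. In Case~2.1, I first reduce SDT to SDI by replacing $E$ with the idempotent $E^2$: the correction $E - E^2$ satisfies $(E - E^2)^2 = -2(E - E^2)$, which lies in $J({\rm T}_n(R))$ because $2 \in J(R)$, and since ${\rm T}_n(R)/J({\rm T}_n(R)) \cong \mathbb{Z}_2^n$ is reduced we deduce $E - E^2 \in J({\rm T}_n(R))$, so $A = E^2 + (D + E - E^2)$ is an SDI representation. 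The same matrix-based argument, now with $E_{11} = 1$ automatic (locality plus $u \equiv 1$), yields bleachedness.

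For $(2)\Rightarrow(1)$ in Case~2.2, I assign to each $A$ diagonal tripotents $e_{ii} \in \{0, 1, -1\}$ matching $a_{ii}$ modulo $J(R)$. Bleachedness guarantees $l_{a_{ii}} - r_{a_{jj}}$ surjective whenever exactly one of $a_{ii}, a_{jj}$ lies in $J(R)$; when both lie in the same residue class, $e_{ii} = e_{jj}$ by construction; the remaining mixed $(1,-1)$ case is exactly condition (2.2) (its mirror being equivalent via $l_a - r_b = -(l_{-a} - r_{-b})$). Lemma~\ref{there ex tri} then supplies a tripotent $E \in {\rm T}_n(R)$ with this diagonal commuting with $A$, and $D := A - E \in J({\rm T}_n(R)) = \Delta({\rm T}_n(R))$ closes the SDT representation. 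Case~2.1 proceeds analogously with $e_{ii} \in \{0, 1\}$ via the ordinary (strongly $J$-clean) Workhorse construction of \cite{bdd}, bleachedness again supplying all the needed surjectivities to produce an idempotent $E$ commuting with $A$.

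The main technical obstacle I expect is that Lemma~\ref{there ex tri} and Lemma~\ref{lemma 2.19} (the Generalized Workhorse Lemma) are stated only for $2 \in U(R)$, so Case~2.1 needs a separate parallel argument — either by mimicking the $\mathbb{Z}_2$-coefficient Workhorse Lemma of \cite{bdd}, or else by reducing upfront to the strongly $J$-clean triangular matrix theory as sketched above. A secondary subtlety is that tripotent units in a local ring with $2 \in J(R)$ need not be $\pm 1$ (as $(1-e)+(1+e) = 2$ no longer separates the factors of $e^2 - 1 = 0$), so a naive $\{-1, 0, 1\}$-valued diagonal choice would fail in Case~2.1; the idempotent-valued $\{0, 1\}$ reduction avoids this cleanly.
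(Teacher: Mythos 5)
Your proposal is correct and takes essentially the same route as the paper: the corner-ring reduction and Theorem~\ref{boolean and yaqub} for the $\mathbb{Z}_2$/$\mathbb{Z}_3$ dichotomy, the identification $\Delta({\rm T}_n(R))=J({\rm T}_n(R))$ together with explicit $(1,n)$-entry computations on sparse matrices for $(1)\Rightarrow(2)$ (the paper runs the same computation inside a ${\rm T}_2(R)$ corner, and its Proposition~\ref{pro 4} plays the role of your $E\mapsto E^2$ reduction when $2\in J(R)$), Lemma~\ref{there ex tri} with the $\{0,1,-1\}$ diagonal assignment for the $\mathbb{Z}_3$ converse, and the strongly $J$-clean triangular theory (the paper simply cites \cite[Theorem 4.4]{csj}) for the $\mathbb{Z}_2$ converse. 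The only slip is that Lemma~\ref{trivial tripotent} gives $E_{11}\in\{1,-1\}$ rather than forcing $E_{11}=1$ when $a_{11}$ is an arbitrary unit, but since the image of $l_u-r_j$ is closed under negation this does not affect the surjectivity conclusion.
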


\begin{proof}
Since $R$ is a local ring, we have either $2 \in J(R)$ or $2 \in U(R)$. We prove the theorem for both cases independently.

\medskip

\noindent \textbf{Case 1:} If $2 \in J(R)$.

\medskip

\noindent $(1) \Rightarrow (2.1)$. Since $2 \in J(R)$, Theorem~\ref{boolean and yaqub} illustrates that $R/J(R)$ is a Boolean ring. But, since $R$ is local, we must have $R/J(R) \cong \mathbb{Z}_2$. Because ${\rm T}_n(R)$ is an SDT ring, Corollary \ref{corner ring} gives that ${\rm T}_2(R)$ is too an SDT ring. Moreover, Proposition \ref{pro 4} allows us to detect that $T_2(R)$ is an SDI ring.

Suppose now $a \in U(R)$ and $b \in J(R)$. We intend to show that $l_a - r_b : R \to R$ is surjective. Thereby, it suffices to prove that, for every $v \in R$, there exists $x \in R$ such that $ax - xb = v$. Put $r := \begin{pmatrix} a & v \\ 0 & b \end{pmatrix}$. Assume $r = g + j$ is an SDI representation, where $g = \begin{pmatrix} e & x \\ 0 & f \end{pmatrix}$ and $j = \begin{pmatrix} d & y \\ 0 & d' \end{pmatrix}$. Since $e$ is an idempotent and $a \in U(R)$, we deduce $e = 1$. However, since $f$ is an idempotent and $b \in J(R)$, we derive $f = 0$. Thus, $g = \begin{pmatrix} 1 & x \\ 0 & 0 \end{pmatrix}$. Since $rg = gr$, we now have $ax - xb = v$. Therefore, $l_a - r_b : R \to R$ is surjective. Similarly, we can show that $l_b - r_a : R \to R$ is surjective, as desired.

\noindent $(2.1) \Rightarrow (1)$. Since $2 \in J(R)$, we only have the case $R/J(R) \cong \mathbb{Z}_2$. Thus, by \cite[Theorem 4.4]{csj}, there is nothing left to prove.

\medskip

\noindent \textbf{Case 2:} If $2 \in U(R)$.

\medskip

\noindent $(1) \Rightarrow (2.2)$. Since $2 \in U(R)$, Theorem \ref{boolean and yaqub} demonstrates that $R/J(R)$ is a Yaqub ring. But, since $R$ is local, we must have $R/J(R) \cong \mathbb{Z}_3$. Because ${\rm T}_n(R)$ is an SDT ring, Corollary \ref{corner ring} gives that ${\rm T}_2(R)$ is too an SDT ring.

Suppose now $a \in U(R)$ and $b \in J(R)$. We intend to show that $l_a - r_b : R \to R$ is surjective. Thereby, it suffices to establish that, for each $v \in R$, there is $x \in R$ such that $ax - xb = v$. Set $r := \begin{pmatrix} a & v \\ 0 & b \end{pmatrix}$. Assume $r = g + j$ is an SDT representation, where $g = \begin{pmatrix} e & x \\ 0 & f \end{pmatrix}$ and $j = \begin{pmatrix} d & y \\ 0 & d' \end{pmatrix}$. Since $b \in J(R)$ and $f$ is a tripotent, we detect $f = 0$. On the other hand, Lemma \ref{trivial tripotent} allows us to conclude that $R$ has no non-trivial tripotents. Hence, since $a \in U(R)$, $e$ is simultaneously a unit and a tripotent element, and thus either $e = 1$ or $e = -1$. If $g = \begin{pmatrix} 1 & x \\ 0 & 0 \end{pmatrix}$, then since $rg = gr$, we have $ax - xb = v$. If, however, $g = \begin{pmatrix} -1 & x \\ 0 & 0 \end{pmatrix}$, then again since $rg = gr$, we have $a(-x) - (-x)b = v$. Consequently, $l_a - r_b : R \to R$ is surjective. Similarly, we can establish that $l_b - r_a : R \to R$ is surjective.

We now show that under the given assumptions, the SDT representation of elements is unique. In this light, suppose \( e + d = f + b \) are two SDT representations in \( R \). Note that, Lemma \ref{trivial tripotent} manifestly yields \( e, f \in \{-1, 0, 1\} \), so that one easily sees that either \( e = f \) or \( e = -f \). If \( e = -f \), then \( 2e = b - d \in \Delta(R) \). Since \( 2 \in U(R) \), we have \( e \in \Delta(R) \). Thus, \( e^2 \in \Delta(R) \cap \text{Id}(R) = \{0\} \), which leads to \( e = 0 \). Therefore, \( e = f = 0 \).

Suppose now that \( a = 1 + d \) and \( b = -1 + d' \) are two SDT representations. Assume that

\[
r = \begin{pmatrix}
a & v \\
0 & b
\end{pmatrix}
\]

\noindent is an element of \( {\rm T}_2(R) \). Also, suppose that \( r = g + w \) is an SDT representation, where

\[
g = \begin{pmatrix}
e & x \\
0 & f
\end{pmatrix}
\quad \text{and} \quad
w = \begin{pmatrix}
d & y \\
0 & d'
\end{pmatrix}.
\]

Bearing in mind the above note, we can assume without loss of generality that \( e = 1 \) and \( f = -1 \). Since \( gw = wg \) and \( 2 \in U(R) \), we deduce \( a(1/2)x - (1/2)xb = v \). This obviously implies that the map \( l_a - r_b : R \to R \) is surjective.

\medskip

\noindent $(2.1) \Rightarrow (1)$. Suppose \( A \in {\rm T}_n(R) \). We show that \( A \) has an SDT representation such that \( A = E + D \) in \( {\rm T}_n(R) \). Since \( R/J(R) \cong \mathbb{Z}_3 \), we see with no any technical difficulty that \( R = J(R) \cup (1 + J(R)) \cup (-1 + J(R)) \). First, we construct the elements on the main diagonal \( E \). Suppose

\[e_{ii} :=
\begin{cases}
\: \: \: 0 \quad \text{if } a_{ii} \in J(R),\\
\: \: \: 1 \quad \text{if } a_{ii} \in 1+J(R),\\
-1 \quad \text{if } a_{ii} \in -1+J(R).
\end{cases}
\]

Therefore, one inspects that \( a_{ii} - e_{ii} \in J(R) \) for each \( i \). Notice that, since \( 2 \in U(R) \), it must be that \( (1+J(R)) \cap (-1 + J(R)) = \emptyset \). If \( e_{ii} \neq e_{jj} \), then we come to

\[
\begin{cases}
(1)\: e_{ii} \in U(R) \: \text{and} \: e_{jj} \in J(R),\\
(2)\: e_{ii} \in J(R) \: \text{and} \: e_{jj} \in U(R),\\
(3)\: e_{ii} \: \text{and} \: e_{jj} \in U(R).
\end{cases}
\]

We prove that, in all three cases, \( l_{a_{ii}} - r_{a_{jj}}: R \to R \) is necessarily surjective.

In fact, for case (1), \( a_{ii} \in U(R) \) and \( a_{jj} \in J(R) \) and, because \( R \) is bleached, \( l_{a_{ii}} - r_{a_{jj}}: R \to R \) is indeed surjective.

The case (2) is observed to be similar to case (1).

In case (3), without harm of generality, assuming \( e_{ii} = 1 \) and \( e_{jj} = -1 \), we obtain that \( a_{ii} - 1, a_{jj} + 1 \in \Delta(R) \). Therefore, by the requested assumption, \( l_{a_{ii}} - r_{a_{jj}}: R \to R \) is surjective. Hence, with Lemma \ref{there ex tri} in hand, there is an tripotent \( E \in T_n(R)\) such that \( AE = EA \) and \( E_{ii} = e_{ii} \) for each \( i \in \{1, \ldots, n\} \). In addition, $$A - E \in J(T_n(R))\subseteq \Delta(T_n(R)),$$ thus completing the proof.
\end{proof}

The case when $n=2$ can be considered separately in the following manner.

\begin{example} Suppose $R$ is an integral domain and an SDT ring. Then, ${\rm T}_2(R)$ is an SDT ring.
\end{example}

\begin{proof} Utilizing Proposition~\ref{local}, $R$ is a local ring. In the other vein, since $R$ is a domain, arguing as in the proof of Proposition~\ref{local}, we can assume that $R$ has no non-trivial tripotents.

Since $R$ is local, we have either $2 \in U(R)$ or $2 \in J(R)$. First, we assume that $2 \in J(R)$, and let $A = \begin{pmatrix} a & \beta \\ 0 & b \end{pmatrix} \in {\rm T}_2(R)$. Note that an SDT ring with $2 \in J(R)$ is always an SDI ring. We show that ${\rm T}_2(R)$ is also SDI. Precisely, we consider the following four cases:

\begin{enumerate}
    \item If $a, b \in J(R)$, then $A \in J(R)$, so $A = 0 + A$ is an SDI representation.
    \item If $a, b \in U(R)$, then since $R$ is both SDI and local, we have $a - 1 \in J(R)$ and $b - 1 \in J(R)$. Therefore, $$A = I_2 + \begin{pmatrix} a-1 & \beta \\ 0 & b-1 \end{pmatrix}$$
        \noindent is an SDI representation for $A$.
\end{enumerate}

\begin{enumerate}
    \setcounter{enumi}{2}
    \item $a \in U(R), b \in J(R)$. Since $R$ is an SDI ring, we obtain $a-1 \in J(R)$. Thus,
    $$A = \begin{pmatrix} 1 & \alpha \\ 0 & 0 \end{pmatrix} + \begin{pmatrix} a-1 & \beta - \alpha \\ 0 & b \end{pmatrix}$$
    \noindent is an SDT representation, where $\alpha = \beta((a-1)+(1-b))^{-1}$.

    \item $b \in U(R), a \in J(R)$. Since $R$ is an SDI ring, we receive $b-1 \in J(R)$. So,
    $$A = \begin{pmatrix} 0 & \alpha \\ 0 & 1 \end{pmatrix} + \begin{pmatrix} a & \beta - \alpha \\ 0 & b-1 \end{pmatrix}$$
    \noindent is an SDT representation, where $\alpha = \beta((b-1)+(1-a))^{-1}$.
\end{enumerate}

Now, suppose $2 \in U(R)$.
\begin{enumerate}
    \setcounter{enumi}{0}
    \item If $a,b \in J(R)$, then $A \in J(R)$, so $A = 0+A$ is an SDT representation.

    \item Given $a, b \in U(R)$.
    If the SDT representations of $a$ and $b$ are of the form $a=1+(a-1)$ and $b=1+(b-1)$, then $$A=I_2+ \begin{pmatrix} a-1 & \beta\\ 0 & b-1 \end{pmatrix}$$
    \noindent is an SDT representation for $A$.

    If the SDT representations of $a$ and $b$ are of the form $a=-1+(a+1)$ and $b=-1+(b+1)$, then $$A=-I_2+ \begin{pmatrix} a+1 & \beta\\ 0 & b+1 \end{pmatrix}$$
    \noindent is an SDT representation for $A$.

    If the SDT representations of $a$ and $b$ are of the form $a=-1+(a+1)$ and $b=1+(b-1)$, then
    $$A= \begin{pmatrix} -1 & \alpha \\ 0 & 1 \end{pmatrix} + \begin{pmatrix} a+1 & \beta - \alpha\\ 0 & b-1 \end{pmatrix}$$
     \noindent is an SDT representation, where $\alpha = 2 \beta (2+(b-1)-(a+1))^{-1}$.

    If the SDT representations of $a$ and $b$ are of the form $a=1+(a-1)$ and $b=-1+(b+1)$, then
    $$A= \begin{pmatrix} 1 & \alpha \\ 0 & -1 \end{pmatrix} + \begin{pmatrix} a-1 & \beta - \alpha\\ 0 & b+1 \end{pmatrix}$$
    \noindent is an SDT representation, where $\alpha = 2 \beta (2+(a-1)-(b+1))^{-1}$. Note that $2 \in U(R)$ is assumed.

    \item Given $a \in U(R)$ and $b \in J(R)$. If the SDT representation of $a$ is of the form $a = 1 + (a-1)$, then $$A = \begin{pmatrix} 1 & \alpha \\ 0 & 0 \end{pmatrix} + \begin{pmatrix} a-1 & \beta - \alpha \\ 0 & b \end{pmatrix}$$
    \noindent is an SDT representation for $A$, where $\alpha = \beta ((1-b) - (1-a))^{-1}$.

    If the SDT representation of $a$ is of the form $a = -1 + (a+1)$, then
    $$A = \begin{pmatrix} -1 & \alpha \\ 0 & 0 \end{pmatrix} + \begin{pmatrix} a+1 & \beta - \alpha \\ 0 & b \end{pmatrix}$$
    \medskip is an SDT representation for $A$, where $\alpha = \beta ((1+b) - (1+a))^{-1}$.

    \item Given $a \in J(R)$ and $b \in U(R)$. If the SDT representation of $b$ is of the form $b = 1 + (b-1)$, then $$A = \begin{pmatrix} 0 & \alpha \\ 0 & 1 \end{pmatrix} + \begin{pmatrix} a & \beta - \alpha \\ 0 & b-1 \end{pmatrix}$$
    \noindent is an SDT representation for $A$, where $\alpha = \beta ((b-1) + (1-a))^{-1}$.

    If the SDT representation of $b$ is of the form $b = -1 + (b+1)$, then
    $$A = \begin{pmatrix} 0 & \alpha \\ 0 & -1 \end{pmatrix} + \begin{pmatrix} a & \beta - \alpha \\ 0 & b+1 \end{pmatrix}$$
    \noindent is an SDT representation for $A$, where $\alpha = \beta ((1+a) - (1+b))^{-1}$, as claimed.
\end{enumerate}
\end{proof}

Now, we manage to examine the above stated example in a more general situation like the following one.

\begin{proposition} Let \( R \) be a ring that has no non-trivial tripotent elements. Then, the following conditions are equivalent:

(1) \( T(R,V) \) is an SDT ring.

(2) Either \( R/J(R) \cong \mathbb{Z}_2 \) or \( R/J(R) \cong \mathbb{Z}_3 \).
\end{proposition}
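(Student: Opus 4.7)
The plan is to use Theorem~\ref{boolean and yaqub} together with the hypothesis that $R$ has only the trivial tripotents $0, \pm 1$ in order to severely restrict the quotient $R/J(R)$.

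For $(1) \Rightarrow (2)$, I would first descend the SDT property from $T(R,V)$ to $R$. Identifying $V$ with the square-zero ideal $\{(0,v):v\in V\}$ of the trivial extension $T(R,V)=R\ltimes V$ gives a nilpotent (hence Jacobson) ideal with $T(R,V)/V\cong R$, so Lemma~\ref{lemma 0}(2) makes $R$ an SDT ring. Given $a\in R$, any SDT representation $a=e+d$ has $e\in\{-1,0,1\}$ by the tripotent hypothesis, and since $\pm 1+\Delta(R)\subseteq U(R)$ by the very definition of $\Delta(R)$, I would conclude $R=U(R)\cup\Delta(R)$. As $U(R)\cap\Delta(R)=\emptyset$ (else some $u\in U(R)\cap\Delta(R)$ combined with $-u\in U(R)$ would force $0=u+(-u)\in U(R)$), the non-units $R\setminus U(R)=\Delta(R)$ form an additive subgroup, so $R$ is local (this is precisely the equivalence noted at the end of the proof of Proposition~\ref{local}) and consequently $\Delta(R)=J(R)$. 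Now $R/J(R)$ is a tripotent division ring by Theorem~\ref{boolean and yaqub}, so every nonzero element satisfies $x^2=1$, and since $(x-1)(x+1)=0$ in a domain forces $x\in\{\pm 1\}$, this yields $R/J(R)\cong\mathbb{Z}_2$ or $R/J(R)\cong\mathbb{Z}_3$.

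For $(2) \Rightarrow (1)$, the hypothesis on $R/J(R)$ makes $R$ local, and since $J(T(R,V))=J(R)\ltimes V$ one gets $T(R,V)/J(T(R,V))\cong R/J(R)\in\{\mathbb{Z}_2,\mathbb{Z}_3\}$, so $T(R,V)$ is itself local with a field quotient. For any $s\in T(R,V)$, reducing modulo $J(T(R,V))$ places $\bar s$ in $\{\bar 0,\bar 1\}$ or $\{\bar 0,\bar 1,\overline{-1}\}$; picking the corresponding integer $e\in\{-1,0,1\}$, I would write $s=e\cdot 1_{T(R,V)}+(s-e\cdot 1_{T(R,V)})$, where $e\cdot 1_{T(R,V)}$ is a central tripotent and the remainder lies in $J(T(R,V))\subseteq\Delta(T(R,V))$. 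The commutativity of the two summands is automatic, and this is the required SDT representation.

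The principal obstacle is the local-ring step in direction $(1)\Rightarrow(2)$: once one knows that the non-units coincide with the additive subgroup $\Delta(R)$, the classification of tripotent division rings as $\mathbb{Z}_2$ or $\mathbb{Z}_3$ and the explicit SDT construction for the converse are both routine.
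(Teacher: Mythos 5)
Your proposal is correct and follows essentially the same route as the paper: descend the SDT property to $R\cong T(R,V)/(0\ltimes V)$ via Lemma~\ref{lemma 0}(2), use the triviality of tripotents to get $R=U(R)\cup\Delta(R)$ and hence locality as in Proposition~\ref{local}, combine with Theorem~\ref{boolean and yaqub} to force $R/J(R)\cong\mathbb{Z}_2$ or $\mathbb{Z}_3$, and conversely build SDT representations with a central tripotent in $\{0,\pm 1\}$ plus an element of $J(T(R,V))\subseteq\Delta(T(R,V))$. The only (harmless) deviation is that you treat the $\mathbb{Z}_2$ case of the converse by the same direct construction instead of invoking the uniquely clean result \cite[Theorem 15]{nzu} as the paper does, and you spell out the locality argument that the paper only sketches.
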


\begin{proof} "\( (1) \Rightarrow (2) \)": If \( T(R,V) \) is an SDT ring, it is easily verified that \( R \) is also an SDT ring. Moreover, since \( R \) has no non-trivial tripotent elements, as shown in Proposition~\ref{local}, we can prove that \( R \) is a local ring. Therefore, according to a combination of the locality of \( R \) and Theorem~\ref{boolean and yaqub}, we conclude \( R/J(R) \cong \mathbb{Z}_2 \) or \( R/J(R) \cong \mathbb{Z}_3 \).

\medskip

"\( (2) \Rightarrow (1) \)": If \( R/J(R) \cong \mathbb{Z}_2 \), then from \cite[Theorem 15]{nzu} we deduce that \( T(R,V) \) is a uniquely clean ring. Thus, it is an SDI ring and, consequently, an SDT ring.

If, however, \( R/J(R) \cong \mathbb{Z}_3 \), we so derive \[ R = J(R) \cup (1 + J(R)) \cup (-1 + J(R)) .\]

Assume now that \( \begin{pmatrix} a & v \\ 0 & a \end{pmatrix} \in T(R,V) \) is fulfilled. So, we have:

\medskip

(a) If \( a \in J(R) \), then \( \begin{pmatrix} a & v \\ 0 & a \end{pmatrix} \in J(T(R,V)) \).

\medskip

(b) If \( a \in 1 + J(R) \), then \[ \begin{pmatrix} a & v \\ 0 & a \end{pmatrix} = I_2 + \begin{pmatrix} a - 1 & v \\ 0 & a - 1 \end{pmatrix} ,\]

\noindent which is an SDT representation.

\medskip

(c) If \( a \in -1 + J(R) \), then \[ \begin{pmatrix} a & v \\ 0 & a \end{pmatrix} = -I_2 + \begin{pmatrix} a + 1 & v \\ 0 & a + 1 \end{pmatrix} ,\]

\noindent which is an SDT representation, as claimed.
\end{proof}

We finish our examinations with the following exhibitions which we leave to the interested reader for a direct check.

\begin{example}
Let \( R \) be a ring in which all tripotent elements are central. Then, the following issues hold:

(1) \( R \) is an SDT ring if, and only if, \( R[[x]] \) is an SDT ring.

(2) \( R \) is an SDT ring if, and only if, \( R[x]/(x^n) \) is an SDT ring.

(3) \( R \) is an SDT ring if, and only if, \( T(R,R) \) is an SDT ring.
\end{example}

\section{Concluding Discussion and Questions}

As above noticed, in \cite{kyzr} the authors defined and investigated those rings $R$, calling them {\it semi-tripotent}, whose elements are a sum of a tripotent element from $R$ and an element from the Jacobson radical of $R$ which, generally, need {\it not} commute each other.

\medskip

Now, regarding Proposition~\ref{semitrip}, one may ask are the classes of semi-tripotent rings and SDT rings independent each other; that is, does there exist an SDT ring which is {\it not} semi-tripotent as well as a semi-tripotent ring that is {\it not} SDT? However, it was proved in \cite[Theorem 3.5 (6)]{kyzr} that $R/J(R)$ has the same presentation as in our Theorem~\ref{boolean and yaqub} plus the requirement that all idempotents of $R$ lift modulo $J(R)$. That is why, it quite surprisingly follows that {\it every SDI ring whose idempotent lift modulo the Jacobson radical is always semi-tripotent}. However, as the opposite claim of Theorem~\ref{boolean and yaqub} is not at all guaranteed in order to be a satisfactory criterion, we do {\it not} know yet if any semi-tripotent ring is SDT. Likewise, due to the lifting restriction of the idempotents, the reciprocal implication {\it cannot} be happen in all generality or, in other words, there is an SDT ring that is {\it not} semi-tripotent.

\medskip

Furthermore, in regard to Corollary~\ref{corner ring}, a logically arising question is whether or not the converse in its formulation holds. So, one may formulate our first challenging question.

\medskip

\noindent{\bf Problem 1.} Suppose $e\in R$ is an idempotent. If both $eRe$ and $(1-e)R(1-e)$ are SDT rings, is it true that so does $R$?

\medskip

Our second intriguing query is related to the study in-depth of a generalized version of the SDT rings like this, which presents a more general setting of the {\it semi-n-potent rings} as defined in \cite{kyzr}.

\medskip

\noindent{\bf Problem 2.} Describe those rings $R$, naming them {\it strongly $\Delta$ n-potent}, whose elements are a sum of a $n$-potent element in $R$ (i.e., an element $a\in R$ such that $a^n=a$ for some $n\in \mathbb{N}$) and an element from $\Delta(R)$ that commute each other.

\medskip

On the other side, in conjunction with \cite{MWL}, we close our work with the following interesting question.

\medskip

\noindent{\bf Problem 3.} Characterize those rings $R$, calling them {\it C$\Delta$ rings}, whose elements are a sum of an element from the center $\mathrm{Z}(R)$ and from $\Delta(R)$.

\medskip

\noindent{\bf Funding:} The first and second authors are supported by Bonyad-Meli-Nokhbegan and receive funds from this foundation.

\vskip3.0pc

\end{document}